\title{Sobolev extensions over Cantor-cuspidal graphs}
\author{Pekka Koskela and Zheng Zhu}
\address{Pekka Koskela\\
Department of Mathematics and Statistics\\
University of Jyv\"askyl\"a, P.O. Box 35 (MaD),
FI-40014, Jyv\"askyl\"a, Finland}
\email{\tt pekka.j.koskela@jyu.fi}
\address{Zheng Zhu\\
School of Mathematical science\\
         Beihang University\\
        Changping District Shahe Higher Education Park South Third Street No. 9\\
        Beijing 102206,\\
        P. R. China}
\email{\tt zhzhu@buaa.edu.cn}
\numberwithin{equation}{section}
\long\def\colred#1\endred{{\color{red}#1}}
\long\def\colgreen#1\endgreen{{\color{green}#1}}
\long\def\colmagenta#1\endmagenta{{\color{magenta}#1}}
\long\def\colblue#1\endblue{{\color{blue}#1}}
\long\def\colyellow#1\endyellow{{\color{yellow}#1}}
\theoremstyle{plain}
\theoremstyle{remark}
\theoremstyle{definition}
\newtheorem{defn}[equation]{Definition}
\newtheorem*{question*}{Question}
\subjclass[2010]{46E35, 30L99}
\thanks{The first author has been supported  by the Academy of Finland via Centre of Excellence in Analysis and Dynamics Research (Project \#323960). The second author has been supported by the NSFC grant no. 12301111.}
\newcounter{prob}
\def\rr{{\mathbb R}}
\def\rn{{{\rr}^n}}
\def\fz{\infty}
\def\loc{{\mathop\mathrm{\,loc\,}}}
\def\boz{{\Omega}}
\def\ls{\lesssim}
\def\mr{\mathcal R}
\def\bint{{\ifinner\rlap{\bf\kern.25em--}
\int\else\rlap{\bf\kern.45em--}\int\fi}\ignorespaces}
\def\bbint{{\ifinner\rlap{\bf\kern.25em--}
\hspace{0.078cm}\int\else\rlap{\bf\kern.45em--}\int\fi}\ignorespaces}
\def\r{\right}
\def\lf{\left}
\newcommand{\R}{\ensuremath{\mathbb{R}}}
\def\XXint#1#2#3{{\setbox0=\hbox{$#1{#2#3}{\int}$ }
\vcenter{\hbox{$#2#3$ }}\kern-.58\wd0}}
\newtheorem{thm}{Theorem}[section]
\newtheorem{lem}{Lemma}[section]
\newtheorem{prop}{Proposition}[section]
\numberwithin{equation}{section}
\def\vint_#1{\mathchoice%
        {\mathop{\kern 0.2em\vrule width 0.6em height 0.69678ex depth -0.58065ex
                \kern -0.8em \intop}\nolimits_{\kern -0.4em#1}}%
        {\mathop{\kern 0.1em\vrule width 0.5em height 0.69678ex depth -0.60387ex
                \kern -0.6em \intop}\nolimits_{#1}}%
        {\mathop{\kern 0.1em\vrule width 0.5em height 0.69678ex
            depth -0.60387ex
                \kern -0.6em \intop}\nolimits_{#1}}%
        {\mathop{\kern 0.1em\vrule width 0.5em height 0.69678ex depth -0.60387ex
                \kern -0.6em \intop}\nolimits_{#1}}}
\def\vintslides_#1{\mathchoice%
        {\mathop{\kern 0.1em\vrule width 0.5em height 0.697ex depth -0.581ex
                \kern -0.6em \intop}\nolimits_{\kern -0.4em#1}}%
        {\mathop{\kern 0.1em\vrule width 0.3em height 0.697ex depth -0.604ex
                \kern -0.4em \intop}\nolimits_{#1}}%
        {\mathop{\kern 0.1em\vrule width 0.3em height 0.697ex depth -0.604ex
                \kern -0.4em \intop}\nolimits_{#1}}%
        {\mathop{\kern 0.1em\vrule width 0.3em height 0.697ex depth -0.604ex
                \kern -0.4em \intop}\nolimits_{#1}}}
\begin{document}

\maketitle

\begin{abstract}
For a continuous function $f:\rr\to\rr$, define the corresponding graph by setting
$$\Gamma_f:=\lf\{(x_1, f(x_1)): x_1\in\rr\r\}.$$ 
In this paper, we study the Sobolev extension property for the upper and lower domains over the graph $\Gamma_{\psi^\alpha_c}$ for $\psi^\alpha_c(x_1)=d(x_1, \mathcal C)^\alpha$, where $\mathcal C$ is the classical ternary Cantor set in the unit interval and $\alpha\in(0, 1)$.  
\end{abstract}

\section{Introduction}
Let $1\leq q\leq p\leq\fz$. If $\psi$ is a Lipschitz function on $\mathbb R,$ then the upper and lower domains determined by the graph of $\psi$ are extension domains for all the first order Sobolev spaces 
$W^{1,p}$ by the works of Calder\'on and Stein \cite{stein}: if $\Omega$ is either one of these domains, then there is a linear bounded extension operator from $W^{1,p}(\Omega)$ into $W^{1,p}(\mathbb R^2).$
In fact, it is not hard to check that one can obtain such an extension operator via a bi-Lipschitz reflection that switches the upper and lower domains: the bi-Lipschitz map $f(x,y)=(x,y-\psi(x))$ maps the two domains onto half-planes and one can use the usual reflection with respect to the first coordinate axes, modulo $f$ and its inverse. 

If $\psi$ is H\"older-continuous, one cannot necessarily extend from $W^{1,p}(\Omega)$
to $W^{1,p}(\mathbb R^2)$ since the graph of $\psi$ can contain cusps. In this case, under the correct assumptions on $p,q$ and the H\"older-exponent, one can nevertheless extend from $W^{1,p}(\Omega)$ to $W^{1,q}(\mathbb R^2),$  see \cite{Fain}, in the sense that the extension belongs to $W^{1,q}_{\loc}(\R^2)$ with 
$$||Eu||_{W^{1,q}(\mathbb R^2\setminus \Omega)}\le C||u||_{W^{1,p}(\Omega)}.$$
More precisely, if $1/2<\alpha<1$ and $p>\frac{\alpha}{2\alpha-1}$ then any $1\le q<\frac{\alpha p}{\alpha+(1-\alpha)p}$ can be obtained. 
For a single cusp, the sharp exponents for the interior are $p>\frac {1+\alpha}{2\alpha}$ and $q<\frac {2\alpha} {(1+\alpha)p}$ and for the exterior  $p>1$ and $q<\frac {(1+\alpha)p}{2\alpha+(1-\alpha)p}$ or $p=1=q$. 
This follows from more general work of
 Gol'dshtein and Sitnikov \cite{GS} who showed that for certain cusp-like domains with H\"older boundaries one can obtain an extension via a reflection which in this case is not anymore bi-Lipschitz. For an exposition and more historical references for the theory of Sobolev spaces on non-smooth domains, we refer the reader to \cite{Mazya}.

In this paper we consider the above extension problem in a model case in oder to gain better insight to the problem. Towards our model case, let $\mathcal C\subset [0,1]\times \{0\}\subset \mathbb R^2$ be the standard ternary Cantor set, obtained by removing a sequence of `centrally located" open intervals from $[0,1]\times \{0\}$: at stage $j$ we have $2^j$ closed intervals, each of length $3^{-j},$ from the middle of each we remove an open interval of length $3^{-j-1}.$
 For $\alpha\in (0, 1)$, we define $\psi^\alpha_c:\rr\to\rr$ by setting 
 \begin{equation}\label{eq:function}
 \psi^\alpha_c(x_1):=\begin{cases}
 d(x_1, \mathcal C)^\alpha, \ \ &\ {\rm if}\ x_1\in(0, 1)\\
 0, \ \ &\ {\rm if}\ x_1\in\rr\setminus(0, 1).
 \end{cases}
 \end{equation}
 Then $\psi^\alpha_c$ is H\"older continuous with exponent $\alpha.$ Define the corresponding graph by setting
\[\Gamma_{\psi^\alpha_c}:=\{(x_1, \psi^\alpha_c(x_1)): x_1\in\rr\},\]  
and let $\Omega^+_{\psi^\alpha_c}$  be the  domain above the ``Cantor-cuspidal" graph $\Gamma_{\psi^\alpha_c}:$ 
\[\Omega^+_{\psi^\alpha_c}:=\lf\{(x_1,x_2)\in\rr^2: x_1\in\rr\ {\rm and}\ x_2>\psi^\alpha_c(x_1)\r\},\]
 and $\Omega^-_{\psi^\alpha_c}$ be the domain below the ``Cantor-cuspidal" graph $\Gamma_{\psi^\alpha_c}:$ 
 \[\Omega^-_{\psi^\alpha_c}:=\lf\{(x_1, x_2)\in\rr^2: x_1\in\rr\ {\rm and}\ x_2>\psi^\alpha_c(x_1)\r\}.\]
See Figure $1$ below. Notice that 
\[\limsup_{t\to 0^-}\frac { \psi^\alpha_c(x_1+t)}{t^\alpha}>0\ {\rm and}\  
 \limsup_{t\to 0^+}\frac { \psi^\alpha_c(x_1+t)}{t^\alpha}>0\]
  for all $x_1\in \mathcal C\setminus \{0,1\}.$ Hence the common boundary of both of these domains is `cusp-like" in a set of  Hausdorff dimension $\frac
 {\log 2}{\log 3}.$
 
Towards our results, we call a homeomorphism $\mr:\rr^2\rightarrow \rr^2$ a reflection over $\Gamma_{\psi^\alpha_c}$ if $\mr(\Omega^+_{\psi^\alpha_c})=\Omega^-_{\psi^\alpha_c}$ and $\mr(x)=x$ for all
$x\in \Gamma_{\psi^\alpha_c}.$ Let $\Omega$ be one of the domains $ \Omega^+_{\psi^\alpha_c},\Omega^-_{\psi^\alpha_c}.$ We say that a reflection $\mr$ over  $\Gamma_{\psi^\alpha_c}$ induces a bounded linear extension operator from $W^{1,p}(\Omega)$ to $W_{\rm loc}^{1,q}(\rr^2)$ if for every $u\in W^{1,p}(\Omega),$ 
the function $v$ defined by setting
$v=u$ on $\Omega$ and $v=u\circ \mr$ on $\rr^2\setminus\overline{\boz}$ has a representative that belongs to $W_{\rm loc}^{1,q}(\rr^2)$ such that for every bounded open set $U\subset\rr^2$, we have
\begin{equation} \label{uu1}
\|v\|_{W^{1,q}(U)}\leq C\|u\|_{W^{1,p}(\boz)},
\end{equation}
for a positive constant $C$ independent of $u$. In our setting, this conclusion easily implies that one can find an extension $Eu$ with 
$$||Eu||_{W^{1,q}(\rr^2\setminus \Omega)}\le C||u||_{W^{1,p}(\Omega)}.$$

\begin{figure}[htbp]
\centering
\includegraphics[width=0.8\textwidth]
{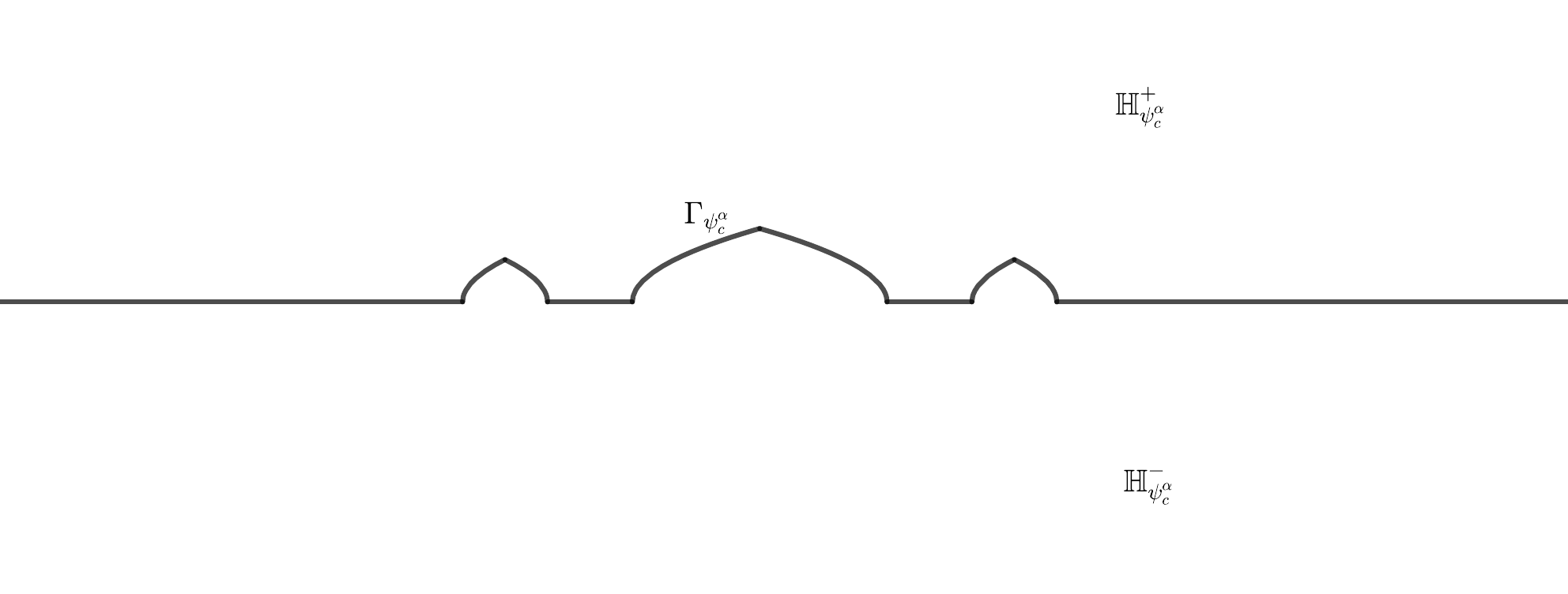}\label{figu2}
\caption{Cantor-cuspidal graph $\Gamma_{\psi^\alpha_c}$}
\end{figure}



The following theorem is the main result of the article. 
 \begin{thm}\label{thm:extension}
Let $\frac{\log 2}{2\log 3}<\alpha<1$. Then there exists a reflection $\mr:{\rr^2}\to{\rr^2}$ over $\Gamma_{\psi^\alpha_c}$ which induces a bounded linear extension operator from $W^{1,p}(\Omega^+_{\psi^\alpha_c})$ to $W_{\rm loc}^{1,q}(\rr^2)$ and a bounded linear extension operator from $W^{1, p}(\Omega^-_{\psi^\alpha_c})$ to $W_{\rm loc}^{1,q}(\rr^2)$, whenever $\frac{(1+\alpha)-\frac{\log 2}{\log 3}}{2\alpha-\frac{\log 2}{\log 3}}<p<\fz$ and $1\leq q<\frac{((1+\alpha)-\frac{\log 2}{\log 3})p}{(1+\alpha)-\frac{\log 2}{\log 3}+(1-\alpha)p}$.
\end{thm}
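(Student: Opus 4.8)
The plan is to construct the reflection $\mr$ explicitly by adapting the single-cusp reflection of Gol'dshtein–Sitnikov to the self-similar Cantor geometry, and then to verify the norm bound \eqref{uu1} via a Whitney-type decomposition argument organized by the scales $3^{-j}$ of the Cantor construction. First I would fix a point $x_1\in(0,1)$ and observe that $d(x_1,\mathcal C)$ is comparable to $3^{-j}$ when $x_1$ lies in a gap of generation $j$ (i.e. one of the $2^{j-1}$ intervals of length $3^{-j-1}$ removed at stage $j$); above such a gap the graph $\Gamma_{\psi^\alpha_c}$ looks, after rescaling by $3^{-j}$ in $x_1$ and $3^{-j\alpha}$ in $x_2$, like a bounded bi-Lipschitz copy of a standard single cusp of exponent $\alpha$. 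On each such rescaled piece I would use (a bi-Lipschitz modification of) the classical reflection $\mr_\alpha(x_1,x_2)=(x_1, -x_2 \cdot g(x_1,x_2))$ that maps the region above the cusp to the region below, glue these pieces together consistently along the common Cantor set $\mathcal C\times\{0\}$ where $\psi^\alpha_c$ and its "mirror" both vanish, and extend by the identity outside $(0,1)\times\mathbb R$. The homeomorphism property and $\mr|_{\Gamma}=\mathrm{id}$ follow from this gluing; the point is to keep track of how the distortion of $\mr$ degrades as $j\to\infty$.

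Next, to prove the extension bound, I would decompose $\Omega^\pm_{\psi^\alpha_c}$ into Whitney-type cubes and group them by the generation $j$ of the nearest gap. For $u\in W^{1,p}(\Omega^+)$, the reflected function $v=u\circ\mr$ on the image cubes satisfies, by the change of variables, $\int |\nabla v|^q \le \int |\nabla u|^q \,|D\mr|^q |\mathrm{Jac}\,\mr^{-1}|$, so on the family of cubes associated to generation $j$ I need a bound of the form $\|\nabla v\|_{L^q}^q \lesssim w_j \|\nabla u\|_{L^p}^q$ with weights $w_j$ that are summable in $j$ once the stated relations among $p,q,\alpha$ and $\frac{\log 2}{\log 3}$ hold. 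The exponent $\frac{\log 2}{\log 3}$ enters precisely because there are $\sim 2^j$ cusps at scale $3^{-j}$: the factor $2^j$ competes with the scaling gain $3^{-j\,(\text{something in }\alpha,p,q)}$ from a single cusp, and the hypotheses $p>\frac{(1+\alpha)-\frac{\log 2}{\log 3}}{2\alpha-\frac{\log 2}{\log 3}}$ and $q<\frac{((1+\alpha)-\frac{\log 2}{\log 3})p}{(1+\alpha)-\frac{\log 2}{\log 3}+(1-\alpha)p}$ are exactly what make $\sum_j 2^j 3^{-j\beta(\alpha,p,q)}<\infty$, i.e. they are the single-cusp exponents of the introduction with the "dimension shift" $1\mapsto 1-\frac{\log 2}{\log 3}$, equivalently $\alpha\mapsto$ effective exponent accounting for the Cantor packing. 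I would also need to control $v$ itself in $L^q$ (straightforward, since $\mr$ is measure-comparable on each piece with controlled constants and $q\le p$) and check that $v\in W^{1,q}_{\mathrm{loc}}$ globally, i.e. that no distributional derivative concentrates on $\Gamma_{\psi^\alpha_c}$; this last point uses that $v$ is continuous across $\Gamma$ (both pieces agree there) together with the ACL characterization, exactly as in the single-cusp case.

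The main obstacle I expect is the bookkeeping at the accumulation points of $\mathcal C$, in particular near the endpoints of $\mathcal C$ and near points of $\mathcal C$ that are limits of gaps of unboundedly increasing generation: one must ensure the locally defined reflections on adjacent gaps of different generations glue to a global \emph{homeomorphism} with quantitatively controlled local distortion, and that the Whitney cubes straddling two different generations (near a Cantor point) are correctly assigned and contribute a convergent series. A clean way to handle this is to not reflect cube-by-cube but to define $\mr$ on vertical strips $\{x_1\in I\}$ for $I$ a gap, with a fixed model reflection transported by the affine map normalizing $I$, and to absorb the mismatch between neighboring strips into a bi-Lipschitz correction whose constant is uniform (because the ratio of lengths of gaps sharing a Cantor boundary point, while unbounded, only forces a \emph{vertical} rescaling by a fixed power, and the relevant integrals are still governed by the per-generation weight $w_j$). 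Once the construction is pinned down, the norm estimate reduces to the summability condition above, which is precisely the hypothesis of the theorem, and I would close the argument by summing over $j$ and over the finitely many generations that meet any fixed bounded set $U$ together with the trivial identity-extension outside $(0,1)$.
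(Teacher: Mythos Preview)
Your overall strategy is correct and matches the paper's: build an explicit reflection, then verify the composition-operator bound by checking a summability condition of the form $\sum_j 2^j\,3^{-j\beta(\alpha,p,q)}<\infty$, which holds precisely under the stated hypotheses on $p,q,\alpha$. The differences are in execution, and the paper's version is considerably cleaner on two points.

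First, the reflection. The paper does not glue single-cusp reflections and worry about compatibility at Cantor points; instead it writes down a single global piecewise formula. On the strip $\{x_1\in I_n^k,\ \psi^\alpha_c(x_1)<x_2<2\psi^\alpha_c(x_1)\}$ it sets $\mr(x_1,x_2)=(x_1,\,-3x_2+4\psi^\alpha_c(x_1))$, and everywhere else in $\Omega^+$ it sets $\mr(x_1,x_2)=(x_1,-x_2)$. These two formulas match continuously on the common boundaries (check $x_2=2\psi^\alpha_c(x_1)$ and $x_1\in\partial I_n^k$), so the map is automatically a global homeomorphism and your ``main obstacle'' about gluing at accumulation points simply does not arise. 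The distortion is then read off from a $2\times2$ matrix: $|D\mr(x)|\lesssim d(x_1,\mathcal C)^{\alpha-1}$ and $|J_\mr(x)|=3$ on the strips, identity elsewhere.

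Second, the analytic step. Rather than a Whitney decomposition plus ad hoc change of variables, the paper invokes the Ukhlov--Vodop'yanov criterion (stated as a lemma): for a $W^{1,1}_{\loc}$ homeomorphism $f$, the bound $\|D(u\circ f)\|_q\le C\|Du\|_p$ for all locally Lipschitz $u$ is equivalent to $\int |Df|^{pq/(p-q)}|J_f|^{-q/(p-q)}\,dx<\infty$. With the explicit $\mr$ this integral reduces in one line to $\sum_n 2^n(3^{-n})^{1+\alpha+(\alpha-1)pq/(p-q)}$, i.e.\ exactly your series. The passage from locally Lipschitz $u$ to general $u\in W^{1,p}$ is handled by density of $C_c^\infty(\rr^2)$ in $W^{1,p}(\Omega^\pm)$ via the segment condition, which also takes care of the ``no concentration on $\Gamma$'' issue you flagged without needing an ACL argument. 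Your route would work, but these two simplifications remove essentially all of the bookkeeping you anticipated.
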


The following proposition shows the sharpness of Theorem \ref{thm:extension}.
\begin{prop}\label{thm:nonex1}
$(1):$ Let $0<\alpha\leq\frac{\log 2}{2\log 3}$ be fixed. Then for arbitrary $1\leq p\leq \fz$, there exist functions $u_e^-\in W^{1, p}(\Omega^-_{\psi^\alpha_c})$ and $u_e^+\in W^{1, p}(\Omega^+_{\psi^\alpha_c})$ which do not have extensions in the class $W^{1,1}_{\rm loc}(\rr^2)$. 

$(2):$ Let $\frac{\log 2}{2\log 3}<\alpha<1$ be fixed. Then for arbitrary $1\leq p\leq\frac{(1+\alpha)-\frac{\log 2}{\log 3}}{2\alpha-\frac{\log 2}{\log 3}}$, there exist functions $u_e^-\in W^{1,p}(\Omega^-_{\psi_c^s})$ and $u_e^+\in W^{1,p}(\Omega^+_{\psi^\alpha_c})$ which do not have extensions in the class $W^{1, 1}_{\rm loc}(\rr^2)$. 

$(3):$ Let $\frac{\log 2}{2\log 3}<\alpha<1$ be fixed. Then for arbitrary $\frac{(1+\alpha)-\frac{\log 2}{\log 3}}{2\alpha-\frac{\log 2}{\log 3}}<p<\fz$ and $\frac{((1+\alpha)-\frac{\log 2}{\log 3})p}{(1+\alpha)-\frac{\log 2}{\log 3}+(1-\alpha)p}\leq q<\fz$, there exist functions $u_e^-\in W^{1, p}(\Omega^-_{\psi^\alpha_c})$ and $u_e^+\in W^{1, p}(\Omega^+_{\psi^\alpha_c})$ which do not have extensions in the class $W^{1,q}_{\rm loc}(\rr^2)$.
\end{prop}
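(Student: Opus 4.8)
The plan is to construct, for each regime, an explicit test function supported near a single cusp point of the Cantor set and to estimate its Sobolev norms on $\Omega^\pm_{\psi^\alpha_c}$ against the impossibility of controlling the norm of any $W^{1,q}_{\rm loc}$-extension on a fixed ball. Since the obstruction is local, it suffices to work near one removed interval at a fixed Cantor stage, or near a carefully chosen family of stages. The three parts correspond to the three ``reasons'' an extension can fail: the cusp set is too fat relative to $\alpha$ (part (1)), the source integrability $p$ is too small even though $\alpha$ is in the good range (part (2)), and the target integrability $q$ is too large for the given $p$ (part (3)). In all cases the mechanism is the same: a function that is, say, $0$ below the graph and ``wants to be $1$'' on the other side, but whose gradient on $\Omega$ is cheap in $L^p$ while any competitor on $\R^2\setminus\Omega$ must pay an $L^q$-price that diverges.

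The key steps. \emph{Step 1 (geometry of the cuspidal comb).} Record that around each removed interval $I$ of length $\ell=3^{-j-1}$ at stage $j$ the graph $\Gamma_{\psi^\alpha_c}$ forms two opposing cusps of height comparable to $\ell^\alpha$; between consecutive Cantor intervals of length $3^{-j}$ the width is $\sim 3^{-j}$ and there are $2^j$ such neighborhoods, so the relevant scales are $r_j\sim 3^{-j}$ with multiplicity $2^j$. \emph{Step 2 (the test function for a single scale).} On $\Omega^-$ (the domain below), take $u_e^-$ to be a fixed smooth function of $x_2$ equal to $1$ on $\{x_2<-c\}$ and $0$ near the graph, cut off horizontally to live over one Cantor interval; estimate $\|u_e^-\|_{W^{1,p}(\Omega^-)}$ — the gradient is supported in a thin cuspidal strip so its $L^p$ norm is a small power of $\ell$. \emph{Step 3 (lower bound for any extension).} If $v\in W^{1,q}_{\rm loc}(\R^2)$ restricts to $u_e^-$ on $\Omega^-$, then on vertical segments crossing the cusp in $\Omega^+$ the function $v$ must pass from values near $1$ (just below the tip, inherited by trace continuity) to values near $0$ (on the top side of the graph, by continuity from $\Omega^-$ across the part of $\Gamma$ away from the cusp), which forces $\int |\nabla v|\gtrsim 1$ on each such segment; integrating over the base of the cusp and using Hölder/Jensen in the $x_1$-variable yields a lower bound for $\|\nabla v\|_{L^q}$ over the cusp region in terms of $\ell$ and $q$. \emph{Step 4 (scaling and summation).} Compare the exponents of $\ell$ obtained in Steps 2 and 3. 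When a single scale already gives a contradiction (the $L^q$-lower bound beats the $L^p$-upper bound as $\ell\to 0$), one scale suffices; otherwise superpose countably many dyadic scales $j\to\infty$ with suitably chosen coefficients $a_j$, so that $\sum a_j^p (\text{scale}_j\text{-}W^{1,p}) < \infty$ while $\sum a_j^q (\text{scale}_j\text{-lower bound}) = \infty$, which is possible exactly at the stated thresholds; the Cantor multiplicity $2^j=3^{j\log 2/\log 3}$ enters here and produces the $\frac{\log 2}{\log 3}$ corrections in the exponents. The upper-domain functions $u_e^+$ are handled symmetrically (now $v$ must jump inside the thin cusp that points \emph{into} $\Omega^+$, which is the more restrictive side and drives part (1)).

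The main obstacle. The delicate point is Step 3 combined with Step 4: one must show the lower bound for $\|\nabla v\|_{L^q}$ is \emph{sharp} in its $\ell$-dependence, i.e. that no extension can do better than the naive ``linear interpolation across the cusp,'' and then verify that the resulting exponent balance reproduces precisely the critical $p$ in part (2) and the critical $q=\frac{((1+\alpha)-\frac{\log 2}{\log 3})p}{(1+\alpha)-\frac{\log 2}{\log 3}+(1-\alpha)p}$ in part (3), with part (1) as the limiting case $\alpha\to(\log 2)/(2\log 3)$ where even $q=1$ fails for every $p$. Getting the trace/continuity argument right — ensuring $v$ genuinely inherits the boundary values $0$ and $1$ on the two relevant portions of $\Gamma$, despite $\Gamma$ having a positive-dimensional singular set — requires care: one should choose the horizontal cutoff so that on a definite portion of $\Gamma$ (away from the Cantor set) the boundary is locally Lipschitz, where the trace is classical, and then propagate the value $1$ down to the cusp tips using that $v\in W^{1,q}_{\rm loc}$ has absolutely continuous restrictions to almost every line. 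I expect the summation bookkeeping with the $2^j$ multiplicities to be the place where the precise constants $\frac{\log 2}{\log 3}$ must be tracked most carefully.
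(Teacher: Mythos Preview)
Your overall strategy --- construct explicit test functions, use the ACL characterization on line segments to obtain a lower bound on the gradient of any extension, then sum over scales with the $2^n$ Cantor multiplicity to produce the $\frac{\log 2}{\log 3}$ correction --- is exactly the paper's approach. The paper also uses a logarithmic weight $(n\log n)^{-\alpha_p}$ of the kind your Step~4 anticipates at the critical thresholds, so your instinct that a single scale will not suffice in part~(3) is correct.

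However, your Step~2 construction for $u_e^-$ has a genuine gap: a function on $\Omega^-$ that equals $0$ near the graph \emph{everywhere} has zero trace on $\Gamma_{\psi^\alpha_c}$ and hence extends trivially by $0$ to $\Omega^+$, so no obstruction arises and your Step~3 never gets started. The paper's construction is different in a way that matters. For $\Omega^-$ the test function is supported \emph{inside} the cuspidal regions $\mathcal C_{n,k}=\{x_1\in I_n^k,\ 0<x_2<\psi^\alpha_c(x_1)\}$ for \emph{odd} $n$ only (and vanishes identically on the even-generation cusps); on each odd cusp it equals a constant near the top and $0$ below. Thus the trace on $\Gamma$ is nonzero on the upper part of each odd-$n$ bump but $0$ on the neighboring $(n{+}1)$-bump, and any extension must jump by the prescribed amount along a \emph{horizontal} segment in $\Omega^+$ of length $\sim 3^{-n}$ joining those two bumps. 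For $\Omega^+$ the test function is supported in a horizontal strip on the \emph{left} side of each bump (value $1$ there, $0$ on the right), so the extension must jump across the cusp $\mathcal C_{n,k}\subset\Omega^-$ again on horizontal segments of length $\sim 3^{-n}$. In both cases the segments are horizontal, not vertical, and the obstruction comes from a horizontal trace discrepancy across a short gap, not from a vertical jump near a cusp tip. Once you replace your test functions by these, your Steps~3--4 go through essentially as written and reproduce the stated exponents.
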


We would like to know if Theorem \ref{thm:extension} exhibits a general principle: could it be the case that for graphs the Sobolev extension problem is equivalent to the existence of a suitable reflection? For partial results in this direction see \cite{GS,KPZ}.  The symmetry in Theorem \ref{thm:extension} cannot hold in general as follows by the results in \cite{GS}, our reflection has better properties that one would in general expect \cite{HKARMA, nre1}.

We close this introduction with a comment regarding the case $p=1=q.$
A domain $\boz\subset\rn$ is called quasiconvex, if there exists a positive constant $C>1$ such that for every pair of points $x, y\in\boz$, there exists a rectifiable curve $\gamma\subset\boz$ joining $x, y$ with 
\[l(\gamma)\leq C|x-y|.\]
For every $0<\alpha<1$, one can easily see that neither $\Omega^+_{\psi^\alpha_c}$ nor $\Omega^-_{\psi^\alpha_c}$ is quasiconvex. Then the corollary in \cite{KMS} with a bit of work implies that neither $\Omega^+_{\psi^\alpha_c}$ nor $\Omega^-_{\psi^\alpha_c}$ is a Sobolev $(1, 1)$-extension domain. Hence, in the proof to Proposition \ref{thm:nonex1} below, we will only discuss the case for $p>1$.

\section{Preliminaries}
The notation $x=(x_1, x_2)\in\rr^2$ means a point in the Euclidean plane $\rr^2$. Typically $C$ will be a constant that depends on various parameters and may differ even on the same line of inequalities. The notation $A\ls B$ means there exists a finite constant $C$ with $A\le CB$ , and $A\sim_{C} B$ means $\frac{1}{C}A\leq B\leq CA$ for a constant $C>1$. The Euclidean distance between points $x, x'$ in the Euclidean plane $\mathbb R^2$ is denoted by $|x-x'|$. The open disk of radius $r$ centered at $x$ is denoted by $D(x,r)$. $\mathcal H^2(A)$ means the $2$-dimensional Lebesgue measure for a measurable set $A\subset\mathbb R^2$. 

To obtain the classical ternary Cantor set in the unit interval $[0, 1]\subset\rr$, we remove a class of pairwise disjoint open intervals step by step. At the first step, we remove the middle $\frac{1}{3}$-interval $I_1^1:=(\frac{1}{3}, \frac{2}{3})$ from the unit interval $I:=[0, 1]$. At the second step, we remove the middle $\frac{1}{9}$-intervals $I_2^1:=(\frac{1}{9}, \frac{2}{9})$ and $I_2^2:=(\frac{7}{9}, \frac{8}{9})$ from the two intervals of length $\frac{1}{3}$ obtained from the first step. By induction, at the $n$-th step, we remove the middle $\frac{1}{3^n}$-intervals from the $2^{n-1}$ intervals $\{I_n^k\}_{k=1}^{2^{n-1}}$ of length $\frac{1}{3^{n-1}}$ obtained from the $(n-1)$-th step. Finally, we obtain the classical ternary Cantor set
$$\mathcal C:=[0,1]\setminus\bigcup_{n=1}^{\fz}\bigcup_{k=1}^{2^{n-1}}I_n^k.$$
Let us denote the removed open intervals by $I_n^k:=(a_n^k, b_n^k)$. Then, the function $\psi^\alpha_c$ defined in (\ref{eq:function}) can be rewritten as
\begin{equation}\label{psi}
\psi^\alpha_c(x_1):=\left\{\begin{array}{ll}
\min\lf\{|x_1-a_n^k|^\alpha, |x_1-b_n^k|^\alpha\r\},&\ x_1\in I_n^k,\\
0,&\ {\rm elsewhere}.
\end{array}\right.
\end{equation}

Let us give the definition of Sobolev spaces.
\begin{defn}\label{defnsobolev}
Let $\boz\subset\mathbb R^2$ be a domain and $u\in L^{\, 1}_{\rm loc}(\boz)$. A vector function $v\in L^{\, 1}_{\rm loc}(\boz,\mathbb R^2)$ is called a weak derivative of $u$ if 
\begin{equation}
\int_{\boz}\eta(x)v(x)dx=-\int_{\boz}u(x)D\eta(x)dx\nonumber
\end{equation}
holds for every function $\eta\in C_c^\fz(\boz)$. We refer to $v$ by $Du$. For $1\leq p\leq\fz$, we define the Sobolev space by 
\begin{equation}
W^{1,p}(\boz):=\{u\in L^{\, p}(\boz): |Du|\in L^{\, p}(\boz)\}\nonumber
\end{equation}
and we define the norm by  
\begin{equation}
\|u\|_{W^{1,p}(\boz)}:=\lf(\int_\boz|u(x)|^pdx\r)^{\frac{1}{p}}+\lf(\int_\boz|Du(x)|^pdx\r)^{\frac{1}{p}}.\nonumber
\end{equation}
If for every bounded open subset $U\subset\boz$ with $\overline{U}\subset\boz$, $u\in W^{1, p}(U)$, then we say $u\in W^{1, p}_{\rm loc}(\boz)$. 
\end{defn}


Although the Cantor-cuspidal graph $\Gamma_{\psi^\alpha_c}$ has a plethora of singularities, the corresponding upper and lower domains $\Omega^+_{\psi^\alpha_c}$ and $\Omega^-_{\psi^\alpha_c}$ still enjoy some nice geometric properties. For example, they satisfy the following so-called segment condition.
\begin{defn}\label{defn:segment}
We say that a domain $\boz\subset\rr^2$ satisfies the segment condition if every $x\in\partial\boz$ has a neighborhood $U_x$ and a nonzero vector $y_x$ such that if $z\in\overline{\boz}\cap U_x$, then $z+ty_x\in\boz$ for $0<t<1$.
\end{defn}
 For domains which satisfy the segment condition, we have the following density result. See \cite[Theorem 3.22]{adams}.
\begin{lem}\label{lem:density}
If the domain $\boz\subset\rr^2$ satisfies the segment condition, then the set of restrictions to $\boz$ of functions in $C_c^\fz(\rr^2)$ is dense in $W^{1,p}(\boz)$ for $1\leq p<\fz$. In short, $C_c^\fz(\rr^2)\cap W^{1,p}(\boz)$ is dense in $W^{1,p}(\boz)$ for $1\leq p<\fz$.
\end{lem}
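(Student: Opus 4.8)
The plan is to run the classical localization-and-translation argument attached to the segment condition (see \cite[Theorem 3.22]{adams}), which uses only the local reflection direction and no quantitative regularity of $\pa\boz$. Fix $1\le p<\fz$ and $u\in W^{1,p}(\boz)$. A first reduction cuts $u$ down to compact support: with $\chi\in C_c^\fz(\rr^2)$ equal to $1$ on $D(0,1)$ and $\chi_R(x):=\chi(x/R)$, the product rule together with dominated convergence give $\chi_R u\to u$ in $W^{1,p}(\boz)$, so it suffices to approximate those $u$ whose support $S$ is compact. A second reduction localizes: the compact set $S\cap\pa\boz$ is covered by finitely many neighborhoods $U_1,\dots,U_k$ furnished by the segment condition (with directions $y_1,\dots,y_k$), while $S\setminus\bigcup_{i=1}^k U_i$ is a compact subset of $\boz$ and hence lies in some $U_0$ with $\overline{U_0}\subset\boz$; choosing a partition of unity $\varphi_0,\dots,\varphi_k$ subordinate to $\{U_j\}$ with $\sum_j\varphi_j\equiv1$ near $S$, we get $u=\sum_{j=0}^k\varphi_j u$ and each $u_j:=\varphi_j u\in W^{1,p}(\boz)$ has compact support $K_j:=\operatorname{supp}u_j$ contained in $U_j$. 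Since the $W^{1,p}(\boz)$-closure of $\{f|_\boz:f\in C_c^\fz(\rr^2)\}$ is a closed linear subspace, it suffices to approximate each $u_j$.

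For $j=0$ this is immediate: $u_0$ has compact support in $\boz$, so its zero extension lies in $W^{1,p}(\rr^2)$ and the mollifications $u_0*\rho_\epsilon\in C_c^\fz(\rr^2)$ converge to it in $W^{1,p}(\rr^2)$, hence in $W^{1,p}(\boz)$. Fix now $j\ge1$, write $v:=u_j$, $K:=K_j$, $y:=y_j$, and pick $\sigma>0$ with $K+D(0,\sigma)\subset U_j$. For $0<t<1$ with $t|y|<\sigma$, set $v_t:=v(\cdot+ty)$; being a translate of $v\in W^{1,p}(\boz)$, this is a Sobolev function on $\boz-ty$, and (declaring it $0$ off $K-ty$) it is well defined on the open set $W_t:=(\boz-ty)\cup(\rr^2\setminus(K-ty))$, the two prescriptions agreeing on the overlap because $v$ vanishes off $K$. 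The point of the segment condition is that $t|y|<\sigma$ forces $\overline\boz\subset W_t$: if $x\in\overline\boz$ with $x+ty\in K$, then $x\in K-ty\subset K+D(0,\sigma)\subset U_j$, so $x\in\overline\boz\cap U_j$ and hence $x+ty\in\boz$. Thus $W_t$ is a one-sided open neighborhood of $\overline\boz$ and, by gluing on its two open pieces, $v_t\in W^{1,p}(W_t)$. Applying $L^p$-continuity of translations to the zero extensions of $v$ and $Dv$ (on $\boz$ one has $v_t(x)=v(x+ty)$, $Dv_t(x)=Dv(x+ty)$, both vanishing once $x$ is farther than $t|y|$ from $K$) yields $v_t\to v$ in $W^{1,p}(\boz)$ as $t\to0^+$.

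It then remains, for each fixed such $t$, to approximate $v_t$ in $W^{1,p}(\boz)$ by functions in $C_c^\fz(\rr^2)$. One cannot simply mollify $v_t$, since $v_t$ need not vanish near the free part $\pa W_t\subset(K\cap\pa\boz)-ty$ of the boundary of its one-sided neighborhood. However, $L_0:=\overline\boz\cap(K-ty)$ is a compact subset of the open set $W_t$, so one may choose $\eta_t\in C_c^\fz(W_t)$ with $\eta_t\equiv1$ on a neighborhood of $L_0$; then $\eta_t v_t\in W^{1,p}(W_t)$ has compact support inside $W_t$, its zero extension lies in $W^{1,p}(\rr^2)$, and $\eta_t v_t=v_t$ on $\boz$ because every point of $\boz$ at which $v_t\neq0$ lies in $L_0$. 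Mollifying, $(\eta_t v_t)*\rho_\epsilon\in C_c^\fz(\rr^2)$ converges to $\eta_t v_t$ in $W^{1,p}(\rr^2)$, hence to $v_t$ in $W^{1,p}(\boz)$, as $\epsilon\to0^+$. Combining this with $v_t\to u_j$, summing over $j$, and undoing the truncation finishes the proof.

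I expect the only genuinely delicate point to be the boundary pieces, i.e.\ the two-step approximation of the last paragraph: the margin $\sigma$ must be built into the partition of unity so that translating by $ty$ drags the part of $\operatorname{supp}u_j$ that touches $\pa\boz$ off the boundary and into $\boz$, and one must then insert the auxiliary cutoff $\eta_t$ — equal to $1$ precisely on the part of $\operatorname{supp}v_t$ lying over $\overline\boz$ — in order to produce genuinely compactly supported smooth functions on $\rr^2$ rather than functions that are only smooth on a one-sided neighborhood of $\boz$. The truncation, the partition of unity, the interior mollification, and the $L^p$-continuity of translations are all routine.
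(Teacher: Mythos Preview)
The paper does not prove this lemma; it simply cites \cite[Theorem 3.22]{adams}. Your argument is correct and is precisely the classical localization--translation--mollification proof from that reference (your insertion of the auxiliary cutoff $\eta_t$ to pass from $W^{1,p}(W_t)$ to $W^{1,p}(\rr^2)$ is a clean way to handle the one point Adams' presentation leaves somewhat implicit).
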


By combining the theorems in \cite{Ukhlov, Vodop1, Vodop2, Vodop3}, also see \cite{Ukhlov2, Ukhlov3}, we obtain the following lemma.
\begin{lem}\label{lem:pQc}
Let $1\leq q<p<\fz$. Suppose that $f:\boz\to\boz'$ is a homeomorphism in the class $W^{1, 1}_{\rm loc}(\boz, \boz').$ Then the following assertions are equivalent:\\
$(1):$ for every locally Lipschitz function $u$ defined on $\boz'$, the inequality 
\[\lf(\int_\boz|D(u\circ f)(x)|^qdx\r)^{\frac{1}{q}}\leq C\lf(\int_{\boz'}|Du(x)|^pdx\r)^{\frac{1}{p}}\]
holds for a positive constant $C$ independent of $u$;\\
$(2):$ \[\int_{\boz}\frac{|Df(x)|^{\frac{pq}{p-q}}}{|J_f(x)|^{\frac{q}{p-q}}}dx<\fz.\]
\end{lem}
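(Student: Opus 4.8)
\textbf{Proof proposal for Lemma \ref{lem:pQc}.}

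The plan is to deduce this from the known change-of-variables / composition theory for Sobolev functions under Sobolev homeomorphisms, so the work is mostly one of assembling the cited results into the stated equivalence. The implication $(2)\Rightarrow(1)$ is the operator-norm estimate for the composition operator $u\mapsto u\circ f$; the implication $(1)\Rightarrow(2)$ recovers integrability of the quantity $|Df|^{pq/(p-q)}/|J_f|^{q/(p-q)}$ from boundedness of that operator by testing against a good family of functions. First I would fix notation and recall the relevant facts: since $f\in W^{1,1}_{\rm loc}(\boz,\boz')$ is a homeomorphism, it has a well-defined (a.e.) differential $Df(x)$, Jacobian $J_f(x)$, and for homeomorphisms of Euclidean domains the Jacobian does not change sign; by passing to a component and possibly composing with a reflection we may assume $J_f\ge 0$ a.e. Also, $f$ satisfies Lusin's condition $(N^{-1})$ automatically as a homeomorphism (images of null sets under $f^{-1}$ being null is the statement we will actually need; we may have to invoke that $f^{-1}\in W^{1,1}_{\rm loc}$, or instead work with the counting-function/area formula valid for all $W^{1,1}_{\rm loc}$ maps without assuming $(N)$).

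For $(2)\Rightarrow(1)$: given $u$ locally Lipschitz on $\boz'$, the chain rule for Sobolev functions composed with Sobolev homeomorphisms (this is exactly the content of the Vodop'yanov–Ukhlov theory cited) gives $D(u\circ f)(x) = Du(f(x))\,Df(x)$ for a.e. $x\in\boz$, hence the pointwise bound $|D(u\circ f)(x)|\le |Du(f(x))|\,|Df(x)|$. Then I would estimate $\int_\boz |D(u\circ f)|^q\,dx \le \int_\boz |Du(f(x))|^q |Df(x)|^q\,dx$, insert the factor $|J_f(x)|^{q/p}|J_f(x)|^{-q/p}$, and apply Hölder's inequality with exponents $p/q$ and $p/(p-q)$: the first factor becomes $\big(\int_\boz |Du(f(x))|^p |J_f(x)|\,dx\big)^{q/p}$, which by the area formula for $f$ (valid since $f$ is a Sobolev homeomorphism) equals $\big(\int_{\boz'}|Du(y)|^p\,dy\big)^{q/p}$, and the second factor is $\big(\int_\boz |Df(x)|^{pq/(p-q)}/|J_f(x)|^{q/(p-q)}\,dx\big)^{(p-q)/p}$, finite by hypothesis $(2)$. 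This yields $(1)$ with $C$ equal to the $(p-q)/(pq)$-th power of the integral in $(2)$.

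For $(1)\Rightarrow(2)$: this is the genuine part, and it is where I would lean on the cited papers rather than reprove. The strategy is the standard one: $(1)$ says the composition operator is bounded $\dot L^{1,p}(\boz')\cap \mathrm{Lip}_{\rm loc}\to \dot L^{1,q}(\boz)$; one shows this forces the pushforward of $|J_f|^{-1}|Df|^{p}$ -type data to define a finite measure, via a duality/exhaustion argument: cover $\boz'$ by small balls, on each use a bump function whose gradient is essentially the characteristic function of an annulus to read off a local lower bound for $\int |Df|^{pq/(p-q)} |J_f|^{-q/(p-q)}$ in terms of $C$ and the measure of the ball, then sum over a disjointed/Vitali family and pass to the limit. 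This is precisely Theorem statements in \cite{Vodop1, Vodop2, Vodop3, Ukhlov} characterizing bounded composition operators between Sobolev spaces by the finiteness of the ``distortion'' integral $\int_\boz |Df|^{pq/(p-q)}|J_f|^{-q/(p-q)}\,dx$; I would cite the precise theorem and remark that the locally-Lipschitz test class in $(1)$ is enough because such functions are dense (in the relevant seminorm, locally) among the competitors used in those proofs.

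The main obstacle is bookkeeping about regularity hypotheses rather than a deep new idea: one must make sure that $f\in W^{1,1}_{\rm loc}$ being merely a homeomorphism (not assumed a priori to satisfy Lusin's condition $(N)$, nor to have $f^{-1}\in W^{1,1}_{\rm loc}$) still licenses the chain rule $D(u\circ f)=Du(f)\,Df$ and the area formula $\int_\boz g(f(x))|J_f(x)|\,dx=\int_{\boz'} g(y)\,dy$ that are used in the $(2)\Rightarrow(1)$ direction. Here I would invoke the fact that for $q<p$ the finiteness of the integral in $(2)$ itself implies, via the results of \cite{Ukhlov, Vodop1, Vodop2, Vodop3} (see also \cite{Ukhlov2, Ukhlov3}), that $f$ enjoys the required change-of-variables properties (in particular condition $(N)$), so the argument is internally consistent; alternatively, since we will only ever apply the lemma to the explicit bi-Lipschitz-off-a-small-set reflection constructed later in the paper, these properties are transparent in our application, and I would note that as well.
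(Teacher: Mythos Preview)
Your proposal is correct and aligns with the paper's treatment: the paper does not give an independent proof of this lemma at all but simply records it as a consequence of the cited results of Ukhlov and Vodop'yanov (with the Gol'dshtein--Ukhlov references for context), so your plan of deducing $(2)\Rightarrow(1)$ by chain rule plus H\"older and the area formula, and deferring $(1)\Rightarrow(2)$ to those references, is exactly what the paper intends. If anything you supply more detail than the paper does; just be careful, as you note, that the regularity needed for the area formula and chain rule is part of what the cited theorems provide.
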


\section{A reflection over $\Gamma_{\psi^\alpha_c}$}
In this section, we always assume $\frac{\log 2}{2\log 3}<\alpha<1$. To begin, we define a class of sets $\{\mathsf R_{n, k}; k=1, ...,2^{n-1} \}_{n=1}^{\fz}$ by setting 
\begin{equation}\label{equa:rec}
\mathsf R_{n, k}:=\lf\{(x_1, x_2)\in\mathbb R^2; x_1\in I_n^k, x_2\in\lf(-2\psi^\alpha_c(x_1), 2\psi^\alpha_c(x_1)\r)\r\}.
\end{equation}
We also define $\mathsf R^+_{n, k}$ and $\mathsf R^-_{n, k}$ to be the corresponding upper and lower parts of $\mathsf R_{n, k}$ with respect to $\Gamma_{\psi^\alpha_c}$ 
by setting
\begin{equation}\label{upperrec}
\mathsf R^+_{n, k}:=\lf\{(x_1, x_2)\in\mathbb R^2; x_1\in I_n^k, x_2\in\lf(\psi^\alpha_c(x_1), 2\psi^\alpha_c(x_1)\r)\r\}
\end{equation}
and
\begin{equation}\label{lowerrec}
\mathsf R^-_{n, k}:=\lf\{(x_1, x_2)\in\mathbb R^2; x_1\in I_n^k, x_2\in\lf(-2\psi^\alpha_c(x_1), \psi^\alpha_c(x_1)\r)\r\}.
\end{equation}
 Fix $x_1\in I_n^k$ for some $n\in\mathbb N$ and $k\in\{1, 2, \cdots, 2^{n-1}\}.$ Then
\begin{equation}\label{uppline}
\mathsf S^+_{x_1}:=\lf\{(x_1, x_2)\in\mathbb R^2; \psi^\alpha_c(x_1)<x_2<2\psi^\alpha_c(x_1)\r\}
\end{equation}
is a vertical line segment in $\mathsf R^+_{n ,k}$ and 
\begin{equation}\label{lowline}
\mathsf S^-_{x_1}:=\lf\{(x_1, x_2)\in\mathbb R^2; -2\psi^\alpha_c(x_1)<x_2< \psi^\alpha_c(x_1)\r\}
\end{equation}
is a vertical line segment in $\mathsf R^-_{n, k}$. 

\begin{figure}[htbp]
\centering
\includegraphics[width=0.8\textwidth]
{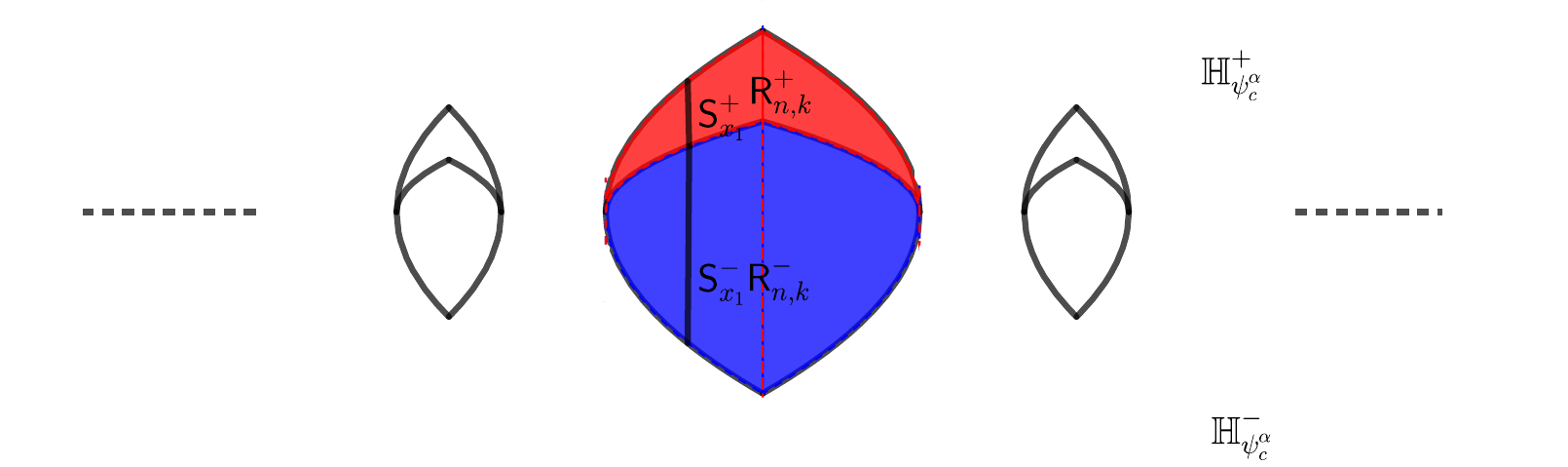}\label{figu2}
\caption{Reflection $\mr$}
\end{figure}

Now, we are ready to define our reflection $\mr:\rr^2\to\rr^2$ over $\Gamma_{\psi^\alpha_c}$. Our reflection will map the segment $\mathsf{S}^+_{x_1}$ onto the segment $\mathsf S^-_{x_1}$ affinely for every $x_1\in I_n^k$. To be precise, we define the reflection $\mr$ on $\Omega^+_{\psi^\alpha_c}$ by setting 
\begin{equation}\label{equa:reflec}
\mr(x)=\left\{\begin{array}{ll}
\lf(x_1, -3x_2+4\psi^\alpha_c(x_1)\r), &\  x\in \mathsf R^+_{n, k},\\
(x_1, -x_2), &\ {\rm elsewhere}.
\end{array}\right.
\end{equation}
where $x=(x_1, x_2)$. It is easy to check that $\mr$ maps $\mathsf R^+_{n, k}$ onto $\mathsf R^-_{n, k}$, for every $n\in\mathbb N$ and $k\in\{1, 2, \cdots, 2^{n-1}\}$. On $\Omega^-_{n, k}$, we simply let $\mr$ be the inverse of (\ref{equa:reflec}). Since $\mr(\Omega^+_{\psi^\alpha_c})=\Omega^-_{\psi^\alpha_c}$, $\mr(\Omega^-_{\psi^\alpha_c})=\Omega^+_{\psi^\alpha_c}$ and $\mr(x)=x$ for every $x\in\Gamma_{\psi^\alpha_c}$, $\mr$ is a reflection over $\Gamma_{\psi^\alpha_c}$. For every $x_1\in I_n^k$ with $n\in\mathbb N$ and $k\in\{1,2,\cdots, 2^{n-1}\}$, the reflection $\mr$ maps $\mathsf S^+_{x_1}$ onto $\mathsf S^-_{x_1}$ affinely. 

By a simple computation, at every point $x\in\mathsf R^+_{n, k}$ for some $n\in\mathbb N$ and $k\in\{1, 2, \cdots, 2^{n-1}\}$, the differential matrix $D\mr(x)$ is 
\begin{equation}\label{equa:matrix}
D\mr(x)
=
\left(
 \begin{array}{ccc}
1 &~~ 0\\

 4(\psi^\alpha_c)'(x_1)&~~ -3 \\
\end{array}
\right).
\end{equation}
By another simple computation, there exists a positive constant $C>0$ such that for every $x\in\mathsf R^+_{n,k}$, we have
\begin{equation}
|D\mr(x)|\leq C|(\psi^\alpha_c)'(x_1)|\leq Cd(x_1, \mathcal C)^{\alpha-1}.
\end{equation}
and 
\begin{equation}\label{eq:jaco1}
|J_\mr(x)|=3.
\end{equation}
Next, let us estimate the analog of $(2)$ of Lemma \ref{lem:pQc}  for $\mr$: 
\begin{eqnarray}\label{equa:finite}
C_+&:=&\int_{\cup_{n=1}^\fz\cup_{k=1}^{2^{n-1}}\mathsf R^+_{n, k}}\frac{|D\mr(x)|^{\frac{pq}{p-q}}}{|J_\mr(x)|^{\frac{q}{p-q}}}dx\\
     &\leq&C\sum_{n=1}^{\fz}\sum_{k=1}^{2^{n-1}}\int_{a_n^k}^{a_n^k+\frac{1}{2\cdot3^n}}\int_{\psi^\alpha_c(x_1)}^{2\psi^\alpha_c(x_1)}(x_1-a_n^k)^{\frac{(\alpha-1)pq}{p-q}}dx_2dx_1\nonumber\\
                                  &\leq&C\sum_{n=1}^{\fz}2^n\int_{0}^{\frac{1}{2\cdot3^n}}x_1^{\alpha+\frac{(\alpha-1)pq}{p-q}}dx_1\nonumber\\
                                  &\leq&C\sum_{n=1}^{\fz}2^n\lf(\frac{1}{3^n}\r)^{1+\alpha+\frac{(\alpha-1)pq}{p-q}}<\infty,\nonumber
\end{eqnarray}
whenever $\frac{(1+\alpha)-\frac{\log 2}{\log 3}}{2\alpha-\frac{\log 2}{\log 3}}<p<\fz$ and $1\leq q<\frac{((1+\alpha)-\frac{\log 2}{\log 3})p}{(1+\alpha)-\frac{\log 2}{\log 3}+(1-\alpha)p}$.  

By a simple computation, we can rewrite the reflection $\mr$ on $\Omega^-_{\psi^\alpha_c}$ as
\begin{equation}\label{equa:reflec1}
\mr(x)=\left\{\begin{array}{ll}
\lf(x_1, \frac{-1}{3}x_2+\frac{4}{3}\psi^\alpha_c(x_1)\r), &\  x\in \mathsf R^-_{n, k},\\
(x_1, -x_2), &\ {\rm elsewhere}.
\end{array}\right.
\end{equation}
where $x=(x_1, x_2)$. At every point $x\in\mathsf R^-_{n, k}$ for $n\in\mathbb N$ and $k\in\{1, 2, \cdots, 2^{n-1}\}$, the differential matrix $D\mr(x)$ is 
\begin{equation}\label{equa:matrix1}
D\mr(x)
=
\left(
 \begin{array}{ccc}
1 &~~ \frac{4}{3}(\psi^\alpha_c)'(x_1)\\

0 &~~ \frac{-1}{3} \\
\end{array}
\right).
\end{equation}
There exists a positive constant $C>1$ such that 
\begin{equation}
|D\mr(x)|\leq C|(\psi^\alpha_c)'(x_1)|\leq Cd(x_1, \mathcal C)^{\alpha-1}
\end{equation}
and 
\begin{equation}\label{eq:jaco2}
|J_\mr(x)|=\frac{1}{3}.
\end{equation}
Hence, we have
\begin{eqnarray}\label{equa:Finite}
C_-&:=&\int_{\cup_{n=1}^\fz\cup_{k=1}^{2^{n-1}}\mathsf R^-_{n, k}}\frac{|D\mr(x)|^{\frac{pq}{p-q}}}{|J_\mr(x)|^{\frac{q}{p-q}}}dx\\
&\leq&C\sum_{n=1}^{\fz}\sum_{k=1}^{2^{n-1}}\int_{a_n^k}^{a_n^k+\frac{1}{2\cdot3^n}}\int_{-2\psi^\alpha_c(x_1)}^{\psi^\alpha_c(x_1)}(x_1-a_n^k)^{\frac{(\alpha-1)pq}{p-q)}}dx_2dx_1\nonumber\\
&\leq&C\sum_{n=1}^{\fz}2^n\int_{0}^{\frac{1}{2\cdot3^n}}x_1^{\alpha+\frac{(1-s)pq}{s(p-q)}}dx_1\nonumber\\
                &\leq&C\sum_{n=1}^{\fz}2^n\lf(\frac{1}{3^n}\r)^{1+\alpha+\frac{(\alpha-1)pq}{p-q}}<\infty,\nonumber
\end{eqnarray}
whenever $\frac{(1+\alpha)-\frac{\log 2}{\log 3}}{2\alpha-\frac{\log 2}{\log 3}}<p<\fz$ and $1\leq q<\frac{((1+\alpha)-\frac{\log 2}{\log 3})p}{(1+\alpha)-\frac{\log 2}{\log 3}+(1-\alpha)p}$. 

Finally, it is easy to see that the reflection $\mr$ is bi-Lipschitz on every open set $U\subset\rr^2$ with $d(U, \Gamma_{\psi^\alpha_c})>0$. 

\section{Sobolev extendability for $\Omega^+_{\psi^\alpha_c}$}
\subsection{Extension from $W^{1,p}(\Omega^+_{\psi^\alpha_c})$ to $W^{1,q}_{\rm loc}(\rr^2)$}
\begin{thm}\label{thm:E1}
Let $\Gamma_{\psi^\alpha_c}\subset\rr^2$ be a Cantor-cuspidal graph with $\frac{\log 2}{2\log 3}<\alpha<1$. The reflection $\mr:\rr^2\to\rr^2$ over $\Gamma_{\psi^\alpha_c}$ defined in (\ref{equa:reflec}) and (\ref{equa:reflec1}) induces a bounded linear extension operator from $W^{1,p}(\Omega^+_{\psi^\alpha_c})$ to $W^{1,q}_{\rm loc}(\rr^2)$, whenever $\frac{(1+\alpha)-\frac{\log 2}{\log 3}}{2\alpha-\frac{\log 2}{\log 3}}<p<\fz$ and $1\leq q<\frac{((1+\alpha)-\frac{\log 2}{\log 3})p}{(1+\alpha)-\frac{\log 2}{\log 3}+(1-\alpha)p}$.  
\end{thm}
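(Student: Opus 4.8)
The plan is to reduce the statement to smooth data using Lemma \ref{lem:density}, to prove the $W^{1,q}$--bound on the two sides of $\Gamma_{\psi^\alpha_c}$ separately by feeding the finiteness estimate (\ref{equa:Finite}) into the change--of--variables criterion of Lemma \ref{lem:pQc}, and finally to glue the two pieces along the Cantor--cuspidal graph. By Lemma \ref{lem:density} it suffices to prove the bound (\ref{uu1}) for $u\in C_c^\fz(\rr^2)\cap W^{1,p}(\Omega^+_{\psi^\alpha_c})$ and then pass to a limit. For such $u$, put $v=u$ on $\Omega^+_{\psi^\alpha_c}$ and $v=u\circ\mr$ on $\rr^2\setminus\overline{\Omega^+_{\psi^\alpha_c}}=\Omega^-_{\psi^\alpha_c}$. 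Since $\mr$ is bi-Lipschitz on every open set at positive distance from $\Gamma_{\psi^\alpha_c}$, fixes $\Gamma_{\psi^\alpha_c}$ pointwise, and $u$ is Lipschitz, the function $v$ is continuous on $\rr^2$ and locally Lipschitz on $\rr^2\setminus\Gamma_{\psi^\alpha_c}$; in particular its classical gradient is defined a.e.

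Now fix a bounded open set $U\subset\rr^2$. One checks from the admissible range that $q<p$ (and $p>1$), so H\"older's inequality gives $\|v\|_{W^{1,q}(U\cap\Omega^+_{\psi^\alpha_c})}\le C(U)\|u\|_{W^{1,p}(\Omega^+_{\psi^\alpha_c})}$. For the lower side, consider the homeomorphism $\mr\colon U\cap\Omega^-_{\psi^\alpha_c}\to\mr(U\cap\Omega^-_{\psi^\alpha_c})\subset\Omega^+_{\psi^\alpha_c}$; it is locally Lipschitz on $\Omega^-_{\psi^\alpha_c}$, hence lies in $W^{1,1}_{\rm loc}$, and its distortion integral $\int_{U\cap\Omega^-_{\psi^\alpha_c}}\frac{|D\mr(x)|^{\frac{pq}{p-q}}}{|J_\mr(x)|^{\frac{q}{p-q}}}\,dx$ is finite: over $\bigcup_{n=1}^{\fz}\bigcup_{k=1}^{2^{n-1}}\mathsf R^-_{n,k}$ it is bounded by the quantity $C_-<\fz$ computed in (\ref{equa:Finite}), while on the complement $\mr$ is the Euclidean reflection $(x_1,x_2)\mapsto(x_1,-x_2)$, contributing at most $C\,\mathcal H^2(U)$. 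Hence Lemma \ref{lem:pQc}(1), applied to the Lipschitz function $u$, yields $\|D(u\circ\mr)\|_{L^q(U\cap\Omega^-_{\psi^\alpha_c})}\le C(U)\|Du\|_{L^p(\Omega^+_{\psi^\alpha_c})}$, and $\|u\circ\mr\|_{L^q(U\cap\Omega^-_{\psi^\alpha_c})}\le C(U)\|u\|_{L^p(\Omega^+_{\psi^\alpha_c})}$ follows from the change of variables $y=\mr(x)$ together with the boundedness of $|J_\mr|^{-1}$ granted by (\ref{eq:jaco1})--(\ref{eq:jaco2}) and of $\mr(U)$. Thus $\|v\|_{W^{1,q}(U\cap\Omega^-_{\psi^\alpha_c})}\le C(U)\|u\|_{W^{1,p}(\Omega^+_{\psi^\alpha_c})}$.

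It remains to verify that $v\in W^{1,q}(U)$ with weak gradient equal a.e.\ to the classical gradient bounded above, after which (\ref{uu1}) holds with constant $C(U)$. This is where the structure of $\Gamma_{\psi^\alpha_c}$ as the graph of a continuous function is used: every vertical line meets $\Gamma_{\psi^\alpha_c}$ in exactly one point, and inspecting (\ref{equa:reflec})--(\ref{equa:reflec1}) shows that $x_2\mapsto v(x_1,x_2)$ is locally Lipschitz on $\rr$ for each fixed $x_1$; for every horizontal line $\{x_2=t\}$ with $t\neq0$ the set $\{x_1:\psi^\alpha_c(x_1)=t\}$ is finite (only finitely many complementary intervals of $\mathcal C$ are longer than a given length), and again $x_1\mapsto v(x_1,t)$ is locally Lipschitz on $\rr$, hence absolutely continuous. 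So $v$ is $\mathrm{ACL}$ with partial derivatives agreeing a.e.\ with its classical gradient, which lies in $L^q(U)$; therefore $v\in W^{1,q}(U)$. (Equivalently, testing against $\eta\in C_c^\fz(\rr^2)$ and integrating by parts slice by slice over $\Omega^+_{\psi^\alpha_c}$ and $\Omega^-_{\psi^\alpha_c}$, the boundary terms along $\Gamma_{\psi^\alpha_c}$ cancel because on each slice the two domains share their boundary points, where $v$ is single-valued.) Finally, choosing $u_j\in C_c^\fz(\rr^2)$ with $u_j\to u$ in $W^{1,p}(\Omega^+_{\psi^\alpha_c})$ and applying the previous two paragraphs to the differences, the $v_j$ form a Cauchy sequence in $W^{1,q}(U)$ for every bounded $U$; the limit is the required representative, the assignment $u\mapsto v$ is clearly linear, and (\ref{uu1}) passes to the limit. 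I expect the main obstacle to be this last gluing step: one must make sure that piecing $u$ and $u\circ\mr$ together along the Cantor--cuspidal graph produces no singular part of the derivative, even when $\Gamma_{\psi^\alpha_c}$ has infinite length (which happens for $\alpha\le\frac{\log 2}{\log 3}$), and it is precisely the graph structure that rescues the argument; the admissible range of exponents, by contrast, enters only through the finiteness of the distortion integral, i.e.\ through (\ref{equa:Finite}).
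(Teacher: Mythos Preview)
Your proof is correct and follows essentially the same approach as the paper's: density via the segment condition and Lemma~\ref{lem:density}, the $(p,q)$--distortion bound on $U\cap\Omega^-_{\psi^\alpha_c}$ through Lemma~\ref{lem:pQc} and the estimate~(\ref{equa:Finite}), and passage to general $u$ by a Cauchy--sequence argument. Your explicit ACL verification of the gluing across $\Gamma_{\psi^\alpha_c}$ is in fact more careful than the paper's own write-up, which simply bounds $\int_{U^+}$ and $\int_{U^-}$ separately and leaves implicit the step that the a.e.\ gradient is the weak gradient on all of $U$.
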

\begin{proof}
Let $\frac{(1+\alpha)-\frac{\log 2}{\log 3}}{2\alpha-\frac{\log 2}{\log 3}}<p<\fz$ and $1\leq q<\frac{((1+\alpha)-\frac{\log 2}{\log 3})p}{(1+\alpha)-\frac{\log 2}{\log 3}+(1-\alpha)p}$ be fixed. Since $\Omega^+_{\psi^\alpha_c}$ satisfies the segment condition defined in Definition \ref{defn:segment}, by Lemma \ref{lem:density}, $C_c^\fz(\rr^2)\cap W^{1,p}(\Omega^+_{\psi^\alpha_c})$ is dense in $W^{1,p}(\Omega^+_{\psi^\alpha_c})$. Let $u\in C_c^\fz(\rr^2)\cap W^{1,p}(\Omega^+_{\psi^\alpha_c})$ be arbitrary. We define a function $\tilde E_\mr(u)$ on $\rr^2$ by setting 
\begin{equation}\label{eq:exten1}
\tilde E_{\mr}(u)(x):=\left\{\begin{array}{ll}
u(\mr(x)),&\ {\rm for}\ x\in \rr^2\setminus\overline{\Omega^+_{\psi^\alpha_c}},\\

u(x),&\ {\rm for}\ x\in \overline{\Omega^+_{\psi^\alpha_c}},
\end{array}\right.
\end{equation}
Since $u\in C_c^\fz(\rr^2)\cap W^{1, p}(\Omega^+_{\psi^\alpha_c})$, $\tilde E_\mr(u)$ is continuous on $\rr^2$. Also since $\mr$ is bi-Lipschitz on every open set $U$ with $d(U, \Gamma_{\psi^\alpha_c})>0$, the function $\tilde E_\mr(u)$ is locally Lipschitz on $\rr^2\setminus\Gamma_{\psi^\alpha_c}$. Hence, the weak derivative $D\tilde E_\mr(u)$ is well-defined on $\rr^2\setminus\Gamma_{\psi^\alpha_c}$. 

Let $U\subset\rr^2$ be an arbitrary bounded open set. We define 
$$\tilde U:=\lf(U\cup\mr(U)\r)\cap\Omega^+_{\psi^\alpha_c}.$$
We will show that $\tilde E_\mr(u)\in W^{1,q}_{\rm loc}(\rr^2)$ with 
$$\|\tilde E_\mr(u)\|_{W^{1,q}(U)}\leq C\|u\|_{W^{1,p}(\tilde U)}\leq C\|u\|_{W^{1,p}(\Omega^+_{\psi^\alpha_c})}.$$
Here the constant $C$ may depend on the open set $U$ but must be independent of the function $u$. Since $U\subset\rr^2$ is an arbitrary bounded open set, it suffices to prove inequalities
\begin{equation}\label{eq:norm}
\lf(\int_{U}|\tilde E_\mr(u)(x)|^qdx\r)^{\frac{1}{q}}\leq C\lf(\int_{\tilde U}|u(x)|^pdx\r)^{\frac{1}{p}}
\end{equation}
and
\begin{equation}\label{equa:gnorm}
\lf(\int_{U}|D\tilde E_\mr(u)(x)|^qdx\r)^{\frac{1}{q}}\leq C\lf(\int_{\tilde U}|Du(x)|^pdx\r)^{\frac{1}{p}}.
\end{equation}
Define 
$$U^+:=U\cap\Omega^+_{\psi^\alpha_c}\ {\rm and}\ U^-:=U\cap\Omega^-_{\psi^\alpha_c}.$$
Then $\tilde U= U^+\cup\mr(U^-)$. Since $\mathcal H^2(\Gamma_{\psi^\alpha_c})=0$, we have 
\begin{equation}\label{eq:E1}
\int_U|\tilde E_\mr(u)(x)|^qdx=\int_{U^+}|\tilde E_\mr(u)(x)|^qdx+\int_{U^-}|\tilde E_\mr(u)(x)|^qdx
\end{equation}
and
\begin{equation}\label{eq:E2}
\int_U|D\tilde E_\mr(u)(x)|^qdx=\int_{U^+}|D\tilde E_\mr(u)(x)|^qdx+\int_{U^-}|D\tilde E_\mr(u)(x)|^qdx.
\end{equation}
By the definition of $\tilde E_\mr(u)$ in  (\ref{eq:exten1}), the H\"older inequality implies
\begin{equation}\label{eq:E3}
\lf(\int_{U^+}|\tilde E_\mr(u)(x)|^qdx\r)^{\frac{1}{q}}\leq C\lf(\int_{U^+}|u(x)|^pdx\r)^{\frac{1}{p}}
\end{equation}
and 
\begin{equation}\label{eq:E4}
\lf(\int_{U^+}|D\tilde E_\mr(u)(x)|^qdx\r)^{\frac{1}{q}}\leq C\lf(\int_{U^+}|D\tilde E_\mr(u)(x)|^pdx\r)^{\frac{1}{p}}.
\end{equation}
By (\ref{eq:jaco1}), the H\"older inequality and a change of variables, we have 
\begin{eqnarray}\label{eq:E5}
\int_{U^-}|\tilde E_\mr(u)(x)|^qdx&\leq&\lf(\int_{U^-}|u(\mr(x))|^p|J_\mr(x)|dx\r)^{\frac{q}{p}}\\
                                         & &\cdot\lf(\int_{U^-}\frac{1}{|J_\mr(x)|^{\frac{q}{p-q}}}dx\r)^{\frac{p-q}{p}}\nonumber\\
                                         &\leq&C\lf(\int_{\mr(U^-)}|u(x)|^pdx\r)^{\frac{q}{p}}.\nonumber
\end{eqnarray}
By combining (\ref{eq:E3}) and (\ref{eq:E5}), we obtain the inequality (\ref{eq:norm}). In order to prove the inequality (\ref{equa:gnorm}), it suffices to show 
\begin{equation}\label{eq:E6}
\lf(\int_{U^-}|D\tilde E_\mr(u)(x)|^qdx\r)^{\frac{1}{q}}\leq C\lf(\int_{\mr(U^-)}|Du(x)|^pdx\r)^{\frac{1}{p}}.
\end{equation}
By Lemma \ref{lem:pQc}, it suffices to show that
$$\int_{U^-}\frac{|D\mr(x)|^{\frac{pq}{p-q}}}{|J_\mr(x)|^{\frac{q}{p-q}}}dx<\fz.$$
By a similar computation as in the previous section, we have 
\begin{eqnarray}
\int_{U^-}\frac{|D\mr(x)|^{\frac{pq}{p-q}}}{|J_\mr(x)|^{\frac{q}{p-q}}}dx&=&\int_{U^-\cap\lf(\bigcup_{n, k}\mathsf R^-_{n, k}\r)}\frac{|D\mr(x)|^{\frac{pq}{p-q}}}{|J_\mr(x)|^{\frac{q}{p-q}}}dx\nonumber\\
   & &+\int_{U\setminus\lf(\bigcup_{n, k}\mathsf R^-_{n, k}\r)}\frac{|D\mr(x)|^{\frac{pq}{p-q}}}{|J_\mr(x)|^{\frac{q}{p-q}}}dx\nonumber\\
&\leq&C_-+\mathcal H^2(U)<\fz,\nonumber
\end{eqnarray}
whenever $\frac{(1+\alpha)-\frac{\log 2}{\log 3}}{2\alpha-\frac{\log 2}{\log 3}}<p<\fz$ and $1\leq q<\frac{((1+\alpha)-\frac{\log 2}{\log 3})p}{(1+\alpha)-\frac{\log 2}{\log 3}+(1-\alpha)p}$. By combining (\ref{eq:E4}) and (\ref{eq:E6}), we obtain the inequality (\ref{equa:gnorm}). Hence, the reflection $\mathcal R$ defined in (\ref{equa:reflec}) induces a bounded linear extension operator from $C_c^\fz(\rr^2)\cap W^{1,p}(\Omega^+_{\psi^\alpha_c})$ to $W^{1,q}_{\rm loc}(\rr^2)$. For arbitrary $u\in W^{1,p}(\Omega^+_{\psi^\alpha_c})$, there exists a sequence of function $\{u_m\}_{m=1}^\fz\subset C_c^\fz(\rr^2)\cap W^{1,p}(\Omega^+_{\psi^\alpha_c})$ with 
$$\lim_{m\to\fz}\|u_m-u\|_{W^{1,p}(\Omega^+_{\psi^\alpha_c})}=0$$
and 
$$\lim_{m\to\fz}u_m(x)=u(x)\ {\rm for\ almost\ every}\ x\in\Omega^+_{\psi^\alpha_c}.$$
By combining (\ref{eq:norm}) and (\ref{equa:gnorm}), we obtain 
\begin{equation}\label{eq:E7}
\|\tilde E_\mr(u_m)\|_{W^{1,q}(U)}\leq C\|u_m\|_{W^{1,p}(\tilde U)}\leq C\|u_m\|_{W^{1,p}(\Omega^+_{\psi^\alpha_c})}.
\end{equation}
It implies that $\{\tilde E_\mr(u_m)\big|_U\}_{m=1}^\fz$ is a Cauchy sequence in the Sobolev space $W^{1,q}(U)$ for arbitrary bounded open set $U\subset\rr^2$. Hence, there exists a subsequence of $\{\tilde E_\mr(u_m)\}$ which converges to a Sobolev function $v_U\in W^{1,q}(U)$ point-wise almost everywhere. By covering $\rr^2$ with countably many bounded open sets, there exists a subsequence of $\{\tilde E_\mr(u_m)\}$ which converges to a function $v\in W^{1,q}_{\rm loc}(\rr^2)$ point-wise almost everywhere and $v\big|_U=v_U$ for every bounded open set $U\subset\rr^2$. 
We define $E_\mr(u)$ on $\mathbb C$ by setting 
\begin{equation}\label{equa:exten}
E_{\mr}(u)(x, y):=\left\{\begin{array}{ll}
u(\mr(x)),&\ {\rm for}\ x\in \Omega^+_{\psi_c},\\
0,&\ {\rm for}\ x\in\Gamma_{\psi_c},\\
u(x),&\ {\rm for}\ x\in \Omega^-_{\psi_c}.
\end{array}\right.
\end{equation}
By the definition of $E_\mr(u)$, we have $\lim_{m\to\fz}\widetilde E_\mr(u_m)(x)=E_\mr(u)(x)$ for almost every $x\in\rr^2$. Hence, $E_\mr(u)(x)=v(x)$ for almost every $x\in\rr^2$. It means that $E_\mr(u)\in W^{1,q}_{\rm loc}(\rr^2)$ with 
\begin{equation}\label{eq:E8} 
\|E_\mr(u)\|_{W^{1,q}(U)}\leq C\|u\|_{W^{1, p}(\tilde U)}\leq C\|u\|_{W^{1,p}(\Omega^+_{\psi^\alpha_c})}.
\end{equation}
Since $U\subset\rr^2$ is an arbitrary bounded open set, we proved that the reflection $\mr$ induces a bounded linear extension operator from $W^{1,p}(\Omega^+_{\psi^\alpha_c})$ to $W^{1,q}_{\rm loc}(\rr^2)$, whenever $\frac{(1+\alpha)-\frac{\log 2}{\log 3}}{2\alpha-\frac{\log 2}{\log 3}}<p<\fz$ and $1\leq q<\frac{((1+\alpha)-\frac{\log 2}{\log 3})p}{(1+\alpha)-\frac{\log 2}{\log 3}+(1-\alpha)p}$.
\end{proof}

\subsection{Sharpness of Theorem \ref{thm:E1} for $\Omega^+_{\psi^\alpha_c}$}
 Let $0<\alpha<1$ be fixed. For every $n\in\mathbb N$ and $k=1, ..., 2^{n-1}$, we define a cuspidal domain $\mathcal C_{n, k}$ by setting
\begin{equation}\label{equa:cusp}
\mathcal C_{n,k}:=\lf\{x=(x_1, x_2)\in\mathbb R^2; x_1\in I_{n,k}\ {\rm and}\ 0<x_2<\psi^\alpha_c(x_1)\r\}.
\end{equation}
The reason why we call $\mathcal C_{n, k}$ a cuspidal domain is that the domain $\mathcal C_{n, k}$  has a cuspidal singularity at every end-point of the removed interval $I_{n,k}$.  For $n$-th generation of cuspidal domains $\{\mathcal C_{n, k}\}_{k=1}^{2^{n-1}}$, let $\mathcal C_{n, 1}$ be the left-most one and $\mathcal C_{n, 2^{n-1}}$ be the right-most one. For a cuspidal domain $\mathcal C_{n, k}$ with $1<k<2^{n-1}$ in the $n$-th generation with $n>2$, there exist two cuspidal domains $\mathcal C_{n_1, k_1}$ and $\mathcal C_{n_2, k_2}$ from generations before $n$ which are close to $\mathcal C_{n, k}$. One is on the right-hand side of $\mathcal C_{n,k}$ and the other is on the left-hand side of $\mathcal C_{n, k}$. Let $\mathcal C_{n_1, k_1}$ be in the left-hand side of $\mathcal C_{n, k}$ and $\mathcal C_{n_2,k_2}$ be in the right-hand side of $\mathcal C_{n, k}$. For every removed open interval $I_n^k$, define $q_n^k:=\frac{1}{2}(a_n^k+b_n^k)$ to be the middle point of it. Then we define two open sets $U^l_{n, k}$ and $U^r_{n, k}$ by setting 
$$U^l_{n, k}:=\lf\{x=(x_1, x_2)\in \Omega^+_{\psi^\alpha_c}:q_{n_1}^{k_1}<x_1<q_n^k\ {\rm and}\ \lf(\frac{1}{3}\r)^{\alpha}\lf(\frac{1}{2\cdot 3^n}\r)^{\alpha}<x_2<\lf(\frac{1}{2\cdot 3^n}\r)^{\alpha}\r\}$$ 
and
$$U^r_{n, k}:=\lf\{x=(x_1, x_2)\in \Omega^+_{\psi^\alpha_c}:q_n^k<x_1<q_{n_2}^{k_2}\ {\rm and}\ \lf(\frac{1}{3}\r)^{\alpha}\lf(\frac{1}{2\cdot 3^n}\r)^{\alpha}<x_2<\lf(\frac{1}{2\cdot 3^n}\r)^{\alpha}\r\}.$$
For the left-most generation $n$ cuspidal domain $\mathcal C_{n, 1}$, we define 
\[U^r_{n, 1}:=\lf\{x=(x_1,x_2)\in\Omega^+_{\psi^\alpha_c}: q_n^1<x_1<q_{n-1}^1\ {\rm and}\ \lf(\frac{1}{3}\r)^{\alpha}\lf(\frac{1}{2\cdot3^n}\r)^{\alpha}<x_2<\lf(\frac{1}{2\cdot3^n}\r)^{\alpha}\r\}\]
and 
$$U^l_{n, 1}:=\lf\{x=(x_1,x_2)\in\Omega^+_{\psi^\alpha_c}:-\fz<x_1<q_{n }^1\ {\rm and}\ \lf(\frac{1}{3}\r)^{\alpha}\lf(\frac{1}{2\cdot3^n}\r)^{\alpha}<x_2<\lf(\frac{1}{2\cdot3^n}\r)^{\alpha}\r\}.$$
For the right-most generation $n$ cuspidal domain $\mathcal C_{n, 2^{n-1}}$, we define 
\[U^l_{n, 2^{n-1}}:=\lf\{x=(x_1,x_2)\in\Omega^+_{\psi^\alpha_c}: q^{2^{n-2}}_{n-1}<x_1<q_n^{2^{n-1}}\ {\rm and}\ \lf(\frac{1}{3}\r)^{\alpha}\lf(\frac{1}{2\cdot3^n}\r)^{\alpha}<x_2<\lf(\frac{1}{2\cdot3^n}\r)^{\alpha}\r\}\]
and
$$U^r_{n, 2^{n-1}}:=\lf\{x=(x_1,x_2)\in\Omega^+_{\psi^\alpha_c}:q_{n}^{2^{n-1}}<x_1<\fz\ {\rm and}\ \lf(\frac{1}{3}\r)^{\alpha}\lf(\frac{1}{2\cdot3^n}\r)^{\alpha}<x_2<\lf(\frac{1}{2\cdot3^n}\r)^{\alpha}\r\}.$$
On every $U^l_{n,k}$ with $k>1$, we define a function $v^+$ by setting 
\begin{equation}\label{eq:v1}
v^+(x):=\left\{\begin{array}{ll}
\frac{-x_2}{\lf(1-\lf(\frac{2}{3}\r)^{\alpha}\r)\lf(\frac{1}{2\cdot3^n}\r)^{\alpha}}+\frac{1}{\lf(1-\lf(\frac{2}{3^n}\r)^{\alpha}\r)}, &\ {\rm if}\ \lf(\frac{2}{3}\r)^{\alpha}\lf(\frac{1}{2\cdot3^n}\r)^{\alpha}<x_2<\lf(\frac{1}{2\cdot3^n}\r)^{\alpha},\\
1, &\ {\rm if} \lf(\frac{1}{2}\r)^{\alpha}\lf(\frac{1}{2\cdot3^n}\r)^{\alpha}\leq x_2\leq\lf(\frac{2}{3}\r)^{\alpha}\lf(\frac{1}{2\cdot3^n}\r)^{\alpha},\\
\frac{x_2}{\lf(\lf(\frac{1}{2}\r)^{\alpha}-\lf(\frac{1}{3}\r)^{\alpha}\r)\lf(\frac{1}{2\cdot3^n}\r)^{\alpha}}-\frac{\lf(\frac{1}{3}\r)^{\alpha}}{\lf(\frac{1}{2}\r)^{\alpha}-\lf(\frac{1}{3}\r)^{\alpha}}, &\ {\rm if}\ \lf(\frac{1}{3}\r)^{\alpha}\lf(\frac{1}{2\cdot3^n}\r)^{\alpha}<x_2<\lf(\frac{1}{2}\r)^{\alpha}\lf(\frac{1}{2\cdot3^n}\r)^{\alpha}.
\end{array}\right.
\end{equation}
On the set $\Omega^+_{\psi^\alpha_c}\setminus\bigcup_{n=1}^{\fz}\bigcup_{k=2}^{2^{n-1}}U^l_{n, k}$, we simply set $v^+(x)\equiv0$. 
For every $1<p<\fz$, we define a number $\alpha_p$ by setting 
\begin{equation}\label{eq:alpha}
\alpha_p^{-1}:=\left\{\begin{array}{ll}
\frac{((1+\alpha)-\frac{\log 2}{\log 3})p}{(1+\alpha)-\frac{\log 2}{\log 3}+(1-\alpha)p}, &\ {\rm if}\ \frac{(1+\alpha)-\frac{\log 2}{\log 3}}{2\alpha-\frac{\log 2}{\log 3}}<p<\fz,\\
1, &\ {\rm if}\ 1<p\leq \frac{(1+\alpha)-\frac{\log 2}{\log 3}}{2\alpha-\frac{\log 2}{\log 3}}.
\end{array}\right.
\end{equation} 
Then $1\leq\alpha^{-1}_p<p$.
Next, we define our test-function $u_e^+$ by setting 
\begin{equation}\label{eq:test}
u_e^+(x)=3^{n\alpha\beta}\lf(\frac{1}{n\log n}\r)^{\alpha_p}v^+(x)\ {\rm for\ every}\ x\in \Omega^+_{\psi^\alpha_c},
\end{equation}
with
\begin{equation}\label{equa:beta}
\beta\leq\frac{(1+\alpha)-\frac{\log 2}{\log 3}}{\alpha p}-1.
\end{equation}
First, a simple computation gives
\begin{eqnarray}
\int_{\mathbb H^+_{\psi^\alpha_c}}|u_e^+(x)|^pdx&\leq&C\sum_{n=1}^{\fz}\sum_{k=2}^{2^{n-1}}3^{n\alpha\beta p}\lf(\frac{1}{n\log n}\r)^{\alpha_pp}\mathcal H^2(U^l_{n, k})\\
                                                                   &\leq&C\sum_{n=1}^\fz2^n3^{n\lf(\beta\alpha p-\alpha-1\r)}\lf(\frac{1}{n\log n}\r)^{\alpha_pp}<\fz,\nonumber
\end{eqnarray}
for $2\cdot3^{(\beta\alpha p-\alpha-1)}<1$ and $\alpha_pp>1$. Furthermore
\begin{equation}\label{eq:deri2}
|Du_e^+(x)|\leq \left\{\begin{array}{ll}
C3^{{n\alpha(\beta+1)}}\lf(\frac{1}{n\log n}\r)^{\alpha_p}, &\ x\in U^l_{n, k}\ {\rm with}\ 1<k,\\
0, &\ {\rm elsewhere}.
\end{array}\right.
\end{equation}
Hence,
\begin{eqnarray}
\int_{\mathbb H^+_{\psi^\alpha_c}}|Du_e^+(x)|^pdx&\leq& C\sum_{n=1}^\fz\sum_{k=2}^{2^{n-1}}3^{n\alpha p(\beta+1)}\lf(\frac{1}{n\log n}\r)^{\alpha_pp}\mathcal H^2(U^l_{n, k})\\
                                                                     &\leq&C\sum_{n=1}^\fz2^n3^{n\lf(\alpha p(\beta+1)-\alpha-1\r)}\lf(\frac{1}{n\log n}\r)^{\alpha_pp}\nonumber\\
                                                                     &<&\fz,\nonumber
\end{eqnarray}
for $2\cdot 3^{\lf(\alpha p(\beta+1)-\alpha-1\r)}\leq 1$ and $\alpha_pp>1$. Hence, $u_e^+\in W^{1,p}(\Omega^+_{\psi^\alpha_c})$. For every $n\in\mathbb N\setminus\{1\}$ and $k\in\{2, 3,\cdots, 2^{n-1}\}$ and every 
$$\lf(\frac{1}{2}\r)^{\alpha}\lf(\frac{1}{2\cdot 3^n}\r)^{\alpha}\leq x_2\leq \lf(\frac{2}{3}\r)^{\alpha}\lf(\frac{1}{2\cdot3^n}\r)^{\alpha},$$
we define $S_{n, k}(x_2):=(\rr\times\{x_2\})\cap\mathcal C_{n, k}$ to be a horizontal line segment inside the cuspidal domain $\mathcal C_{n,k}$.
See the picture below. Assume that there exists an extension $E(u_e^+)\in W^{1,q}_{\rm loc}(\rr^2)$ for some $1\leq q\leq p$. By the $ACL$-characterization of Sobolev functions and the H\"older inequality, for almost every 
$$x_2\in\lf(\lf(\frac{1}{2}\r)^{\alpha}\lf(\frac{1}{2\cdot3^n}\r)^{\alpha}, \lf(\frac{2}{3}\r)^{\alpha}\lf(\frac{1}{2\cdot3^n}\r)^{\alpha}\r),$$ 
we have 
\begin{equation}\label{eq:inte5}
C\int_{S_{n,k}(x_2)}|DE(u_e^+)(x)|^qdx_1\geq3^{n\alpha\beta q+n(q-1)}\lf(\frac{1}{n\log n}\r)^{\alpha_pq},
\end{equation}
with a uniform constant $C$ independent of $n,k,x_2$. 

The following three propositions show the sharpness of the result in Theorem \ref{thm:E1}. 
\begin{prop}\label{thm:noex1'}
Let $0<\alpha\leq\frac{\log 2}{2\log 3}$ and let $\Gamma_{\psi^\alpha_c}$ be the corresponding Cantor-cuspidal graph. Then, for arbitrary $1<p\leq \fz$, the function $u_e^+$ cannot be extended to be a function in the class $W^{1,1}_{\rm loc}(\rr^2)$.
\end{prop}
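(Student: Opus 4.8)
The plan is to argue by contradiction: suppose $u_e^+$ admits an extension $E(u_e^+)\in W^{1,1}_{\rm loc}(\rr^2)$. The key point is that for each removed interval $I_n^k$ the cuspidal domain $\mathcal C_{n,k}$ is pinched to a point at both endpoints $a_n^k$ and $b_n^k$, so any Sobolev extension must ``see'' through these cusps and relate the value of $E(u_e^+)$ on the strip $U^l_{n,k}$ to its value on $U^r_{n,k}$ (and on the neighbouring strips from earlier generations). First I would fix $n$ and $k$ with $1<k<2^{n-1}$, restrict to a horizontal slice $S_{n,k}(x_2)$ with $x_2$ in the middle band where $v^+\equiv 1$ on the left piece, and use the $ACL$-characterization of $W^{1,1}_{\rm loc}$ together with the fundamental theorem of calculus along $S_{n,k}(x_2)$: since $u_e^+$ equals $3^{n\alpha\beta}(n\log n)^{-\alpha_p}$ on a portion of $U^l_{n,k}$ lying on one side of the cusp and equals $0$ on the corresponding portion of $U^r_{n,k}$ on the other side, the integral of $|DE(u_e^+)|$ along $S_{n,k}(x_2)$ must be at least of order $3^{n\alpha\beta}(n\log n)^{-\alpha_p}$ (this is the $q=1$ case of the type of lower bound recorded in \eqref{eq:inte5}).

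Next I would integrate this slice estimate in $x_2$ over the band $\bigl((1/2)^\alpha(2\cdot3^n)^{-\alpha},(2/3)^\alpha(2\cdot3^n)^{-\alpha}\bigr)$, whose length is comparable to $3^{-n\alpha}$, to obtain
\[
\int_{\mathcal C_{n,k}\cup U^l_{n,k}\cup U^r_{n,k}}|DE(u_e^+)(x)|\,dx\;\geq\; c\,3^{n\alpha\beta}\Bigl(\tfrac{1}{n\log n}\Bigr)^{\alpha_p}3^{-n\alpha}
\]
with $c$ independent of $n,k$. Summing over the $2^{n-1}$ values of $k$ in generation $n$ gives a lower bound of order $2^n\,3^{n\alpha(\beta-1)}(n\log n)^{-\alpha_p}$ for the total variation of $DE(u_e^+)$ over the $n$-th generation region. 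I would choose $\beta$ as large as \eqref{equa:beta} permits, namely $\beta=\frac{(1+\alpha)-\frac{\log 2}{\log 3}}{\alpha p}-1$; then using $0<\alpha\le\frac{\log 2}{2\log 3}$ (so $2\alpha\le\frac{\log2}{\log3}$), one checks that the exponent satisfies $2\cdot 3^{\alpha(\beta-1)}\ge 1$, i.e. the geometric factor $2^n 3^{n\alpha(\beta-1)}$ does not decay. Hence the $n$-th generation contributes at least $c\,(n\log n)^{-\alpha_p}$ to $\int|DE(u_e^+)|$; since $\alpha_p\le 1$ when $1<p\le\frac{(1+\alpha)-\frac{\log2}{\log3}}{2\alpha-\frac{\log2}{\log3}}$ (and in the regime $\alpha\le\frac{\log2}{2\log3}$ this upper bound on $p$ is vacuous so $\alpha_p=1$ throughout), the series $\sum_n (n\log n)^{-\alpha_p}$ diverges. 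This forces $\int_K|DE(u_e^+)|=\infty$ on any bounded open set $K$ containing all the cuspidal domains, contradicting $E(u_e^+)\in W^{1,1}_{\rm loc}(\rr^2)$.

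There are two bookkeeping points to handle with care. First, one must verify that on the portions of $U^l_{n,k}$ and $U^r_{n,k}$ lying on the two sides of a given cusp the function $u_e^+$ really does jump by the full amount $3^{n\alpha\beta}(n\log n)^{-\alpha_p}$: by construction $v^+$ is supported on the strips $U^l_{n,k}$ and equals $1$ on the relevant band, while on the adjacent strip $U^r_{n,k}$ (which by definition is $U^l_{n,k'}$ for the neighbouring domain, or has $v^+\equiv 0$) the value is either $0$ or a value carrying a different, smaller prefactor $3^{n'\alpha\beta}(n'\log n')^{-\alpha_p}$ with $n'<n$; in either case the difference is $\gtrsim 3^{n\alpha\beta}(n\log n)^{-\alpha_p}$ since $\beta<0$ and $n'<n$. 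Second, the slices $S_{n,k}(x_2)$ over different $(n,k)$ and the strips $U^l_{n,k}$ are pairwise disjoint (up to the Cantor boundary of measure zero), so the generation-by-generation lower bounds genuinely add up rather than overlap.

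\textbf{Main obstacle.} The delicate step is the uniform slice estimate: establishing that \emph{every} admissible horizontal segment through the cusp forces $\int_{S_{n,k}(x_2)}|DE(u_e^+)|\,dx_1\gtrsim 3^{n\alpha\beta}(n\log n)^{-\alpha_p}$ with a constant independent of $n,k,x_2$. One has to make sure the horizontal band used actually lies inside the cuspidal domain $\mathcal C_{n,k}$ for the full range of $x_2$ chosen — i.e. that the height $(1/2)^\alpha(2\cdot3^n)^{-\alpha}$ is below the cusp profile $\psi^\alpha_c$ on a sub-interval of $I_n^k$ long enough to connect the left and right strips — and that the endpoints of the segment lie in $U^l_{n,k}$ respectively $U^r_{n,k}$ where the boundary values are as claimed. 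This is a geometric compatibility check between the height scale $3^{-n\alpha}$ and the horizontal scale $3^{-n}$ of the $n$-th generation interval, and it is exactly where the Hölder exponent $\alpha$ enters; once it is in place, the summation over $n$ and $k$ and the divergence of $\sum(n\log n)^{-\alpha_p}$ are routine.
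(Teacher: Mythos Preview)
Your plan is correct and matches the paper's argument: assume an extension in $W^{1,1}_{\rm loc}$, invoke the slice lower bound \eqref{eq:inte5} with $q=1$, integrate over the height band of thickness $\sim 3^{-n\alpha}$, and sum over $n$ and $k$ to force $\int_{D(0,2)}|DE(u_e^+)|=\infty$. The only real difference is the choice of $\beta$: the paper takes $\beta\in\bigl[1-\tfrac{\log 2}{\alpha\log 3},\,-1\bigr]$, which puts $u_e^+$ in $W^{1,\infty}$ outright and so handles every $1<p\le\infty$ with a single function, whereas your $p$-dependent choice $\beta=\tfrac{(1+\alpha)-\log 2/\log 3}{\alpha p}-1$ still gives $2\cdot 3^{\alpha(\beta-1)}\ge 1$ (strictly for finite $p$), hence the same divergence. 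Two small corrections to your bookkeeping: at the right exit point of each slice $S_{n,k}(x_2)$ one actually has $u_e^+=0$ (it does not lie in any $U^l_{n',k'}$ with the same height band), so there is no ``smaller prefactor'' to compare with and no need for the hypothesis $\beta<0$; and your reading that $\alpha_p=1$ throughout is not literally what \eqref{eq:alpha} says when $2\alpha<\log 2/\log 3$, but this is harmless since in that range the geometric factor $2\cdot 3^{\alpha(\beta-1)}$ is strictly larger than $1$ and dominates any power of $(n\log n)^{-1}$.
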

\begin{proof}
Since $0<\alpha\leq\frac{\log 2}{2\log 3}$, we can choose $1-\frac{\log 2}{\alpha\log 3}\leq\beta\leq-1$ in the definition of the function $u^+_e$ in (\ref{eq:test}). Then $u_e^+\in W^{1, \fz}(\mathbb H^+_{\psi^\alpha_c})$. Since both $|u_e^+|$ and $|Du_e^+|$ vanish outside a bounded set, the H\"older inequality implies $u_e^+\in W^{1,p}(\Omega^+_{\psi^\alpha_c})$, for every $1<p\leq\fz$. Assume that there exists an extension  $E(u_e^+)$ in the class $W^{1,1}_{\rm loc}(\rr^2)$. By (\ref{eq:inte5}) and the Fubini theorem, we obtain 
\begin{eqnarray}\label{eq:E11'}
\int_{D(0, 2)}|D E(u_e^+)(x)|dx&\geq& C\sum_{n=1}^{\fz}\sum_{k=2}^{2^{n-1}}3^{n\alpha(\beta-1)}\lf(\frac{1}{n\log n}\r)^{\alpha_p}\\
                                             &\geq& C\sum_{n=1}^\fz2^n3^{n\alpha(\beta-1)}\lf(\frac{1}{n\log n}\r)^{\alpha_p}=\fz.\nonumber
\end{eqnarray}
This contradicts  the assumption that $E(u^+_e)\in W^{1,1}_{\rm loc}(\rr^2)$ and the proof is finished.  
\end{proof}
\begin{prop}\label{thm:noex2'}
Let $\frac{\log 2}{2\log 3}<\alpha<1$ and $\Gamma_{\psi^\alpha_c}$ be the corresponding Cantor-cuspidal graph. Then for arbitrary $1< p\leq\frac{(1+\alpha)-\frac{\log 2}{\log 3}}{2\alpha-\frac{\log 2}{\log 3}}$, there exists a function $u_e^+\in W^{1,p}(\Omega^+_{\psi^\alpha_c})$ which can not be extended to be a function in the class $W^{1,1}_{\rm loc}(\rr^2)$. 
\end{prop}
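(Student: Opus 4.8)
The plan is to reuse the test function $u_e^+$ from \eqref{eq:test}, but now with the parameter $\beta$ chosen so that $u_e^+$ sits exactly at the endpoint of the $W^{1,p}$-integrability range, so that it belongs to $W^{1,p}(\Omega^+_{\psi^\alpha_c})$ but its oscillation across the cuspidal gaps is too violent to be reconciled by any $W^{1,1}_{\rm loc}$ extension. Concretely, I would first verify that for $1<p\le \frac{(1+\alpha)-\frac{\log 2}{\log 3}}{2\alpha-\frac{\log 2}{\log 3}}$ the choice $\beta=\frac{(1+\alpha)-\frac{\log 2}{\log 3}}{\alpha p}-1$ in \eqref{equa:beta} satisfies $2\cdot 3^{\alpha p(\beta+1)-\alpha-1}=1$, so the two geometric-type series bounding $\|u_e^+\|_{L^p}^p$ and $\|Du_e^+\|_{L^p}^p$ become $\sum_n (n\log n)^{-\alpha_p p}$, which converges because $\alpha_p p>1$ (recall $1\le \alpha_p^{-1}<p$ from the line after \eqref{eq:alpha}). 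Here one must also check $2\cdot 3^{\beta\alpha p-\alpha-1}<1$, which holds since $\beta\alpha p-\alpha-1<\alpha p(\beta+1)-\alpha-1$ as $\alpha p>0$. This places $u_e^+\in W^{1,p}(\Omega^+_{\psi^\alpha_c})$.

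Next I would run the non-extension argument exactly as in the proof of Proposition \ref{thm:noex1'}: assume for contradiction that $E(u_e^+)\in W^{1,1}_{\rm loc}(\rr^2)$. Invoke the $ACL$-characterization together with \eqref{eq:inte5} (with $q=1$): for almost every horizontal level $x_2$ in the band $\big(\big(\tfrac12\big)^\alpha\big(\tfrac1{2\cdot3^n}\big)^\alpha,\ \big(\tfrac23\big)^\alpha\big(\tfrac1{2\cdot3^n}\big)^\alpha\big)$ the line integral of $|DE(u_e^+)|$ over the slice through the cusp $\mathcal C_{n,k}$ is at least $c\,3^{n\alpha\beta}(n\log n)^{-\alpha_p}$. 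Integrating in $x_2$ over this band contributes a factor comparable to the band height $3^{-n\alpha}$, so by Fubini
\[
\int_{D(0,2)}|DE(u_e^+)(x)|\,dx \ \ge\ C\sum_{n=2}^{\fz}\sum_{k=2}^{2^{n-1}} 3^{n\alpha(\beta-1)}\Big(\tfrac{1}{n\log n}\Big)^{\alpha_p} \ =\ C\sum_{n=2}^\fz 2^n\,3^{n\alpha(\beta-1)}\Big(\tfrac{1}{n\log n}\Big)^{\alpha_p}.
\]
With the chosen $\beta$ one computes $2\cdot 3^{\alpha(\beta-1)} = 2\cdot 3^{\frac{(1+\alpha)-\frac{\log 2}{\log 3}}{p}-2\alpha}$, and since $p\le\frac{(1+\alpha)-\frac{\log2}{\log3}}{2\alpha-\frac{\log2}{\log3}}$ we get $\frac{(1+\alpha)-\frac{\log2}{\log3}}{p}\ge 2\alpha-\frac{\log2}{\log3}$, hence $2\cdot 3^{\alpha(\beta-1)}\ge 3^{\frac{\log2}{\log3}}\cdot 3^{-\frac{\log2}{\log3}}=1$; so the series diverges (the $(n\log n)^{-\alpha_p}$ factor is harmless against a non-decreasing geometric term). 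This contradicts $E(u_e^+)\in W^{1,1}_{\rm loc}(\rr^2)$, proving the claim. The lower domain function $u_e^-$ is handled by the mirror construction, using the reflection symmetry of the configuration.

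The one genuinely delicate point — the main obstacle — is establishing \eqref{eq:inte5} at the endpoint scaling, i.e.\ that on each slice $S_{n,k}(x_2)$ the extension must pick up a definite amount of derivative. This is where the geometry of the Cantor arrangement enters: the function $v^+$ equals $1$ on the "plateau" over the gap $I_n^k$ but is forced to $0$ on the neighboring sets $U^l_{n\pm\cdot}$ that lie over adjacent, shallower cusps, and any $W^{1,1}$ representative restricted to a.e.\ horizontal line must realize this unit jump over a horizontal distance comparable to $3^{-n}$ (the width of the gap), after accounting for the $3^{n\alpha\beta}(n\log n)^{-\alpha_p}$ normalization; the Hölder inequality on the slice then converts the $L^1$ length integral into the stated lower bound. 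I would present this as a short lemma isolating the slice estimate, since it is the same computation underlying \eqref{eq:inte5} already asserted in the text, and then the rest is the bookkeeping above.
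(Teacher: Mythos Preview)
Your proposal is correct and follows essentially the same route as the paper: fix $\beta=\frac{(1+\alpha)-\frac{\log 2}{\log 3}}{\alpha p}-1$ in the test function $u_e^+$ of \eqref{eq:test}, cite the slice estimate \eqref{eq:inte5} with $q=1$, and use Fubini together with $2\cdot 3^{\alpha(\beta-1)}\ge 1$ (and $\alpha_p=1$ in this range of $p$) to force $\sum_n 2^n 3^{n\alpha(\beta-1)}(n\log n)^{-\alpha_p}=\infty$. Your added remarks on verifying $u_e^+\in W^{1,p}$ and isolating \eqref{eq:inte5} as a lemma are fine but not needed, since the paper has already established both before stating the proposition.
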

\begin{proof}
For every $1< p\leq\frac{(1+\alpha)-\frac{\log 2}{\log 3}}{2\alpha-\frac{\log 2}{\log 3}}$, we fix $\beta=\frac{(1+\alpha)-\frac{\log 2}{\log 3}}{\alpha p}-1$ in the definition of the function $u_e^+\in W^{1,p}(\Omega^+_{\psi^\alpha_c})$ in (\ref{eq:test}). Assume that there exists an extension  $E(u^+_e)\in W^{1,1}_{\rm loc}(\rr^2)$. Then by (\ref{eq:inte5}) and the Fubini theorem, we have 
\begin{eqnarray}\label{eq:E12'}
\int_{D(0, 2)}|D E(u_e^+)(x)|dx&\geq& C\sum_{n=1}^\fz\sum_{k=2}^{2^{n-1}}3^{n\alpha(\beta-1)}\lf(\frac{1}{n\log n}\r)^{\alpha_p}\\
                                             &\geq& C\sum_{n=1}^\fz2^n3^{n\alpha(\beta-1)}\lf(\frac{1}{n\log n}\r)^{\alpha_p}=\fz,\nonumber
\end{eqnarray}
since $2\cdot 3^{n\alpha(\beta-1)}\geq 1$ and $\alpha_p=1$ for $1<p\leq\frac{(1+\alpha)-\frac{\log 2}{\log 3}}{2\alpha-\frac{\log 2}{\log 3}}$. This contradicts the assumption that $E(u_e^+)\in W^{1,1}_{\rm loc}(\rr^2)$ and the proof is finished.
\end{proof}

\begin{prop}\label{thmnoex3}
Let $\frac{\log 2}{2\log 3}<\alpha<1$ and $\Gamma_{\psi^\alpha_c}$ be the corresponding Cantor-cuspidal graph. For arbitrary $\frac{(1+\alpha)-\frac{\log 2}{\log 3}}{2\alpha-\frac{\log 2}{\log 3}}<p<\fz$, there exists a function $u_e^+\in W^{1,p}(\Omega^+_{\psi^\alpha_c})$ which can not be extended to be a function in the class $W^{1,q}_{\rm loc}(\rr^2)$, whenever $\frac{((1+\alpha)-\frac{\log 2}{\log 3})p}{(1+\alpha)-\frac{\log 2}{\log 3}+(1-\alpha)p}\leq q<\fz$.
\end{prop}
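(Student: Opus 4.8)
The plan is to reuse the test function $u_e^+$ already constructed in (\ref{eq:test}), but now to choose the parameter $\beta$ at the endpoint $\beta=\frac{(1+\alpha)-\frac{\log 2}{\log 3}}{\alpha p}-1$ that made the two convergence conditions $2\cdot 3^{(\beta\alpha p-\alpha-1)}<1$ and $2\cdot 3^{(\alpha p(\beta+1)-\alpha-1)}\le 1$ into equalities (up to the logarithmic factor, which is exactly why the $\frac{1}{n\log n}$ weight is present). With this choice one has $u_e^+\in W^{1,p}(\Omega^+_{\psi^\alpha_c})$: the two series estimating $\int|u_e^+|^p$ and $\int|Du_e^+|^p$ become $\sum_n n^{-\alpha_p p}$ up to constants, which converges precisely because $\alpha_p p>1$ in the range $\frac{(1+\alpha)-\frac{\log 2}{\log 3}}{2\alpha-\frac{\log 2}{\log 3}}<p<\fz$, where $\alpha_p^{-1}=\frac{((1+\alpha)-\frac{\log 2}{\log 3})p}{(1+\alpha)-\frac{\log 2}{\log 3}+(1-\alpha)p}$.

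Next I would argue by contradiction: suppose $E(u_e^+)\in W^{1,q}_{\rm loc}(\rr^2)$ for some $q$ with $\alpha_p^{-1}\le q<\fz$. The key geometric input is that the horizontal slit $S_{n,k}(x_2)$ has length comparable to $d(q_n^k,\mathcal C)^{1/\alpha}$-type quantity—more precisely, at height $x_2\sim (2\cdot 3^n)^{-\alpha}$ the width of the cuspidal domain $\mathcal C_{n,k}$ is $\sim 3^{-n}$—while across the two flanking strips $U^l_{n,k}$ and $U^r_{n,k}$ the function $u_e^+$ jumps by $3^{n\alpha\beta}(n\log n)^{-\alpha_p}$ between $0$ and its plateau value. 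Applying the ACL characterization along horizontal lines together with Hölder's inequality on $S_{n,k}(x_2)$ gives exactly the pointwise bound (\ref{eq:inte5}): for a.e.\ admissible $x_2$,
\[
\int_{S_{n,k}(x_2)}|DE(u_e^+)(x)|^q\,dx_1\ \gtrsim\ 3^{n\alpha\beta q+n(q-1)}\Big(\tfrac{1}{n\log n}\Big)^{\alpha_p q}.
\]
Integrating this over the $x_2$-interval of length $\sim 3^{-n\alpha}$ and summing over the $k=2,\dots,2^{n-1}$ cuspidal domains in generation $n$, then over $n$, via Fubini gives
\[
\int_{D(0,2)}|DE(u_e^+)(x)|^q\,dx\ \gtrsim\ \sum_{n}2^n\,3^{-n\alpha}\,3^{n\alpha\beta q+n(q-1)}\Big(\tfrac{1}{n\log n}\Big)^{\alpha_p q}.
\]
Plugging in $\beta=\frac{(1+\alpha)-\frac{\log 2}{\log 3}}{\alpha p}-1$ one checks that the exponential factor $2^n 3^{n(\alpha\beta q-\alpha+q-1)}$ has exponent $\ge 0$ exactly when $q\ge \alpha_p^{-1}$; in the borderline $q=\alpha_p^{-1}$ case the exponential part is bounded below by a constant and the residual series is $\sum_n (n\log n)^{-1}=\fz$, while for $q>\alpha_p^{-1}$ the series diverges geometrically. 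Either way the integral is infinite, contradicting $E(u_e^+)\in W^{1,q}_{\rm loc}(\rr^2)$.

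I expect the main obstacle to be the careful bookkeeping of exponents in the final summation—verifying that the threshold $q=\alpha_p^{-1}$ is exactly where the geometric factor $2^n 3^{n(\cdots)}$ flips sign, and that at that threshold the logarithmic weight $(n\log n)^{-\alpha_p q}=(n\log n)^{-1}$ still produces a divergent series rather than a convergent one. This is precisely the role for which the $\frac{1}{n\log n}$ factor was built into $u_e^+$: it is summable to the power $\alpha_p p>1$ (needed for membership in $W^{1,p}$) but not to the power $\alpha_p q\le 1$ at the endpoint (needed for non-extendability). A secondary point requiring care is justifying (\ref{eq:inte5}) itself: one must ensure the flanking strips $U^l_{n,k},U^r_{n,k}$ genuinely lie on opposite sides of $S_{n,k}(x_2)$ within $\Omega^+_{\psi^\alpha_c}$ and that the relevant horizontal lines meet both the plateau region and the zero region, so that the oscillation of $E(u_e^+)$ along such a line is bounded below by the plateau height; this is where the geometry of how earlier-generation cusps sit adjacent to generation-$n$ cusps is used, exactly as set up in the definitions of $U^l_{n,k}$ and $U^r_{n,k}$ preceding (\ref{eq:v1}).
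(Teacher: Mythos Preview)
Your proposal is correct and follows essentially the same route as the paper: fix $\beta=\frac{(1+\alpha)-\frac{\log 2}{\log 3}}{\alpha p}-1$, invoke the line estimate (\ref{eq:inte5}), integrate in $x_2$ and sum over $n,k$ via Fubini to force $\int_{D(0,2)}|DE(u_e^+)|^q=\infty$. The only cosmetic difference is that the paper first reduces to the endpoint $q_o=\alpha_p^{-1}$ by H\"older and then checks divergence only there (where $2\cdot 3^{\alpha(\beta q_o-1)+(q_o-1)}=1$ and $\alpha_p q_o=1$), whereas you treat all $q\ge\alpha_p^{-1}$ directly; both arrive at the same divergent series $\sum_n (n\log n)^{-1}$ at the threshold.
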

\begin{proof}
Define
$$q_o=\frac{((1+\alpha)-\frac{\log 2}{\log 3})p}{(1+\alpha)-\frac{\log 2}{\log 3}+(1-\alpha)p}.$$
The H\"older inequality implies that it suffices to show that $u^+_e$ can not be extended to be a function in the class $W^{1,q_o}_{\rm loc}(\rr^2)$.
Fix $\beta=\frac{(1+\alpha)-\frac{\log 2}{\log 3}}{\alpha p}-1$ in the definition of $u^+_e$ in (\ref{eq:test}). By (\ref{eq:inte5}) and the Fubini theorem, we have 
\begin{eqnarray}\label{eq:E13'}
\int_{D(0, 2)}|DE(u_e^+)(x)|^{q_o}dx&\geq& C\sum_{n=1}^\fz\sum_{k=2}^{2^{n-1}}3^{(n\beta q_o-n)\alpha+n(q_o-1)}\lf(\frac{1}{n\log n}\r)^{\alpha_pq_o}\\
                                        &\geq&C\sum_{n=1}^\fz2^n3^{\alpha(n\beta q_o-n)+n(q_o-1)}\lf(\frac{1}{n\log n}\r)^{\alpha_pq_o}=\fz, \nonumber
\end{eqnarray}
since $2\cdot 3^{\alpha(\beta q_o-1)+(q_o-1)}=1$ and $\alpha_pq_o=1$. This contradicts  the assumption that $E(u_e^+)\in W^{1,q_o}_{\rm loc}(\rr^2)$ and the proof is finished.
\end{proof}

\section{Sobolev extendability for $\Omega^-_{\psi^\alpha_c}$}
\subsection{Extension from $W^{1,p}(\Omega^-_{\psi^\alpha_c})$ to $W^{1,q}_{\rm loc}(\rr^2)$}
\begin{thm}\label{thm:E2}
Let $\Gamma_{\psi^\alpha_c}\subset\rr^2$ be a Cantor-cuspidal graph with $\frac{\log 2}{2\log 3}<\alpha<1$. Then the reflection $\mr:\rr^2\to\rr^2$ over $\Gamma_{\psi^\alpha_c}$ defined in (\ref{equa:reflec}) and (\ref{equa:reflec1}) induces a bounded linear extension operator from $W^{1,p}(\Omega^-_{\psi^\alpha_c})$ to $W^{1,q}_{\rm loc}(\rr^2)$, whenever $\frac{(1+\alpha)-\frac{\log 2}{\log 3}}{2\alpha-\frac{\log 2}{\log 3}}<p<\fz$ and $1\leq q<\frac{((1+\alpha)-\frac{\log 2}{\log 3})p}{(1+\alpha)-\frac{\log 2}{\log 3}+(1-\alpha)p}$.  
\end{thm}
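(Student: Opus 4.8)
The plan is to run the proof of Theorem~\ref{thm:E1} essentially verbatim, interchanging the roles of $\Omega^+_{\psi^\alpha_c}$ and $\Omega^-_{\psi^\alpha_c}$. First I would check that $\Omega^-_{\psi^\alpha_c}$ also satisfies the segment condition of Definition~\ref{defn:segment} --- at every boundary point the constant downward vector $(0,-1)$ works --- so that Lemma~\ref{lem:density} gives density of $C_c^\fz(\rr^2)\cap W^{1,p}(\Omega^-_{\psi^\alpha_c})$ in $W^{1,p}(\Omega^-_{\psi^\alpha_c})$. For $u\in C_c^\fz(\rr^2)\cap W^{1,p}(\Omega^-_{\psi^\alpha_c})$ I would set $\tilde E_\mr(u):=u$ on $\overline{\Omega^-_{\psi^\alpha_c}}$ and $\tilde E_\mr(u):=u\circ\mr$ on $\rr^2\setminus\overline{\Omega^-_{\psi^\alpha_c}}=\Omega^+_{\psi^\alpha_c}$; this function is continuous on $\rr^2$, and since $\mr$ is bi-Lipschitz on every open set at positive distance from $\Gamma_{\psi^\alpha_c}$ it is locally Lipschitz off $\Gamma_{\psi^\alpha_c}$, so its weak derivative is defined a.e.

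Fix a bounded open $U\subset\rr^2$, put $U^{\pm}:=U\cap\Omega^{\pm}_{\psi^\alpha_c}$ and $\tilde U:=U^-\cup\mr(U^+)$. Because $\mathcal H^2(\Gamma_{\psi^\alpha_c})=0$, both $\int_U|\tilde E_\mr(u)|^q$ and $\int_U|D\tilde E_\mr(u)|^q$ decompose into the corresponding integrals over $U^-$ and $U^+$. Over $U^-$ the extension equals $u$, so H\"older's inequality together with $\mathcal H^2(U)<\fz$ bounds both contributions by $C\|u\|_{W^{1,p}(U^-)}$. Over $U^+$ the extension equals $u\circ\mr$ with $\mr:\Omega^+_{\psi^\alpha_c}\to\Omega^-_{\psi^\alpha_c}$; for the $L^q$ part I would use (\ref{eq:jaco1}), H\"older's inequality and the change of variables $y=\mr(x)$ exactly as in (\ref{eq:E5}), and for the gradient part I would invoke Lemma~\ref{lem:pQc}, which reduces the matter to showing
\[
\int_{U^+}\frac{|D\mr(x)|^{\frac{pq}{p-q}}}{|J_\mr(x)|^{\frac{q}{p-q}}}\,dx<\fz .
\]
Splitting $U^+$ into $U^+\cap\bigcup_{n,k}\mathsf R^+_{n,k}$ and its complement, on the complement $\mr(x)=(x_1,-x_2)$ is an isometry so the integrand is bounded by an absolute constant, while over $\bigcup_{n,k}\mathsf R^+_{n,k}$ the integral is at most $C_+$, which is finite in the stated range of $p$ and $q$ by the computation~(\ref{equa:finite}). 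This yields (\ref{eq:norm}) and (\ref{equa:gnorm}) for smooth $u$.

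To pass to a general $u\in W^{1,p}(\Omega^-_{\psi^\alpha_c})$ I would repeat the closing argument of Theorem~\ref{thm:E1}: take $u_m\in C_c^\fz(\rr^2)\cap W^{1,p}(\Omega^-_{\psi^\alpha_c})$ with $u_m\to u$ in $W^{1,p}(\Omega^-_{\psi^\alpha_c})$ and a.e., deduce from the estimates just proved that $\{\tilde E_\mr(u_m)\big|_U\}_m$ is Cauchy in $W^{1,q}(U)$ for every bounded open $U$, and extract --- through a countable exhaustion of $\rr^2$ --- a subsequence converging a.e.\ to a limit $E_\mr(u)\in W^{1,q}_{\rm loc}(\rr^2)$; since $\mr$ maps $\mathcal H^2$-null subsets of $\Omega^+_{\psi^\alpha_c}$ to null sets, this limit coincides with the function equal to $u$ on $\Omega^-_{\psi^\alpha_c}$, to $u\circ\mr$ on $\Omega^+_{\psi^\alpha_c}$, and to $0$ on $\Gamma_{\psi^\alpha_c}$, and it satisfies (\ref{uu1}); linearity is immediate. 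I do not anticipate a real obstacle: the only structural change from Theorem~\ref{thm:E1} is that the governing Jacobian--gradient integral is now $C_+$ of (\ref{equa:finite}) rather than $C_-$ of (\ref{equa:Finite}), and these are finite under exactly the same restrictions on $p$ and $q$, so the hypotheses of the theorem remain unchanged; the single point needing a line of proof is the segment condition for $\Omega^-_{\psi^\alpha_c}$, which is elementary.
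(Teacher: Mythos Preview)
Your proposal is correct and matches the paper's own proof, which consists of the single sentence ``Simply replace $\mathsf R^-_{n, k}$ by $\mathsf R^+_{n, k}$ in the proof of Theorem~\ref{thm:E1} and repeat the argument.'' You have spelled out precisely that replacement---the governing integral becomes $C_+$ from (\ref{equa:finite}) rather than $C_-$ from (\ref{equa:Finite})---and correctly observed that the remaining steps (segment condition, density, H\"older, change of variables, Cauchy-sequence limit) go through unchanged.
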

\begin{proof}
Simply replace $\mathsf R^-_{n, k}$ by $\mathsf R^+_{n, k}$ in the proof of Theorem \ref{thm:E1} and repeat the argument. 
\end{proof}

\subsection{Sharpness of Theorem \ref{thm:E2}}
Let us define a function $v_-$ on $\Omega^-_{\psi^\alpha_c}$. For every even $n\in\mathbb N$ and $k=1, 2, ..., 2^{n-1}$, we define 
\begin{equation}\label{equa:even}
v_-(x)=0, \ {\rm for\ every}\ x\in\mathcal C_{n,k}.\nonumber
\end{equation}
For odd $n\in\mathbb N$ and every $k=1, 2, ..., 2^{n-1}$, we define the function $v_-$ on $\mathcal C_{n, k}$ by setting
\begin{equation}\label{eq:E9}
v_-(x):=\left\{\begin{array}{ll}
1,&\  x_2>\lf(\frac{1}{6}\r)^{\alpha}\lf(\frac{1}{2\cdot3^n}\r)^{\alpha},\\
\frac{x_2-\lf(\frac{1}{9}\r)^{\alpha}\lf(\frac{1}{2\cdot3^n}\r)^{\alpha}}{\lf(\lf(\frac{1}{6}\r)^{\alpha}-\lf(\frac{1}{9}\r)^{\alpha}\r)\lf(\frac{1}{2\cdot3^n}\r)^{\alpha}}, &\  \lf(\frac{1}{9}\r)^{\alpha}\lf(\frac{1}{2\cdot3^n}\r)^{\alpha}\leq x_2\leq \lf(\frac{1}{6}\r)^{\alpha}\lf(\frac{1}{2\cdot3^n}\r)^{\alpha},\\
0,&\  x_2<\lf(\frac{1}{9}\r)^{\alpha}\lf(\frac{1}{2\cdot3^n}\r)^{\alpha}.
\end{array}\right.
\end{equation}
Outside the set $\cup_{n=1}^{\fz}\cup_{k=1}^{2^{n-1}}\mathcal C_{n,k}$, we just set $v_-=0$.  Finally, we define our function $u_e^-$ on $\Omega^-_{\psi^\alpha_c}$ by setting
\begin{equation}\label{equa:func}
u^-_e(x)=3^{n\alpha\beta}\lf(\frac{1}{n\log n}\r)^{\alpha_p}v_-(x),
\end{equation}
where $\alpha_p$ is given in (\ref{eq:alpha}) and $\beta$ is given in (\ref{equa:beta}). Then we have 
\begin{eqnarray}\label{eq:E10}
\int_{\mathbb H^-_{\psi^\alpha_c}}|u^-_e(x)|^pdx&\leq&\sum_{n\ {\rm is\ odd}}\sum_{k=1}^{2^{n-1}}\int_{\mathcal C_{n,k}}|u^-_e(x)|^pdx\\
&\leq& C\sum_{n\ {\rm is\ odd}}2^n3^{n\lf(p\alpha\beta-1-\alpha\r)}\lf(\frac{1}{n\log n}\r)^{\alpha_pp}<\fz,\nonumber
\end{eqnarray}
since $2\cdot3^{p\alpha\beta-1-\alpha}<1$. There exists a positive constant $C>0$ such that for odd $n$
\begin{equation}\label{equa:lessC}
|D u^-_e(x)|\leq
C3^{n\alpha(\beta+1)}\lf(\frac{1}{n\log n}\r)^{\alpha_p},
\end{equation}
for every $x\in\mathcal C_{n, k}$ with $\lf(\frac{1}{9}\r)^{\alpha}\lf(\frac{1}{2\cdot 3^n}\r)^{\alpha}\leq x_2\leq\lf(\frac{1}{6}\r)^{\alpha}\lf(\frac{1}{2\cdot 3^n}\r)^{\alpha}$. Moreover $Du^-_e(x)=0$ elsewhere. Hence, we have 
\begin{eqnarray}
\int_{\mathbb H^-_{\psi^\alpha_c}}|D u^-_e(x)|^pdx&\leq& C\sum_{n\ {\rm is\ odd}}\sum_{k=1}^{2^{n-1}}3^{n\lf(p\alpha(\beta+1)-(1+\alpha)\r)}\lf(\frac{1}{n\log n}\r)^{\alpha_pp}\\
                                          &\leq&C\sum_{n\ {\rm is\ odd}}2^n3^{n\lf(p\alpha(\beta+1)-(1+\alpha)\r)}\lf(\frac{1}{n\log n}\r)^{\alpha_pp}<\fz,\nonumber
\end{eqnarray}
since $2\cdot3^{p\alpha(\beta+1)-(1+\alpha)}\leq 1$ and $\alpha_pp>1$. Hence, $u^-_e\in W^{1,p}(\Omega^-_{\psi^\alpha_c})$. Fixing an odd $n\in\mathbb N$ and $k=1, 2, ..., 2^{n-1}$, there exists two cuspidal domains $\mathcal C_{n+1, k_1}$ and $\mathcal C_{n+1, k_1'}$ nearby from the next generation. One is on the right-hand side of $\mathcal C_{n, k}$ and the other is on the left-hand side of $\mathcal C_{n, k}$. Without loss of generalization, we assume $\mathcal C_{n+1,k_1}$ is on the left-hand side of $\mathcal C_{n,k}$ and $\mathcal C_{n+1,k_2}$ is on the right-hand side of $\mathcal C_{n, k}$. For every 
\begin{equation}
\lf(\frac{1}{6}\r)^{\alpha}\lf(\frac{1}{2\cdot3^n}\r)^{\alpha}<x_2< \lf(\frac{1}{3}\r)^{\alpha}\lf(\frac{1}{2\cdot3^n}\r)^{\alpha}\nonumber
\end{equation}
we define
$$L^l_{n, k}:=\lf\{x=(x_1,x_2)\in\mathbb H^+_{\psi^\alpha_c}:q_{n+1}^{k_1}<x_1<q_n^k\r\}$$
and
$$L^r_{n, k}:=\lf\{x=(x_1,x_2)\in\mathbb H^+_{\psi^\alpha_c}: q_n^k<x_1<q_{n+1}^{k_2}\r\}.$$
Assume that there exists an extension $E(u^-_e)\in W^{1,q}_{\rm loc}(\rr^2)$. By the $ACL$-characterization of Sobolev functions and the H\"older inequality, we have 
\begin{equation}\label{equa:inte1}
C\int_{L^l_{n ,k}(x_2)}|D E(u^-_e)(x)|^qdx_1\geq3^{n\alpha\beta q+n(q-1)}\lf(\frac{1}{n\log n}\r)^{\alpha_pq}
\end{equation}
and
\begin{equation}\label{equa:inte2}
 C\int_{L^r_{n ,k}(x_2)}|D E(u^-_e)(x)|^qdx_1\geq3^{n\alpha\beta q+n(q-1)}\lf(\frac{1}{n\log n}\r)^{\alpha_pq}
\end{equation}
for almost every 
\begin{equation}
x_2\in\lf(\lf(\frac{1}{6}\r)^{\alpha}\lf(\frac{1}{2\cdot3^n}\r)^{\alpha}, \lf(\frac{1}{3}\r)^{\alpha}\lf(\frac{1}{2\cdot3^n}\r)^{\alpha}\r),\nonumber
\end{equation}
where the constant $C$ is independent of $n, k, x_2$. 

The following propositions show the sharpness of the result in Theorem \ref{thm:E2}.
\begin{prop}\label{thm:noex1}
Let $0<\alpha\leq\frac{\log 2}{2\log 3}$ and $\Gamma_{\psi^\alpha_c}$ be the corrersponding Cantor-cuspidal graph. Then for arbitrary $1< p\leq \fz$, there exists a function $u^-_e\in W^{1,p}(\Omega^-_{\psi^\alpha_c})$ which cannot be extended to be a function in the class $W^{1,1}_{\rm loc}(\rr^2)$.
\end{prop}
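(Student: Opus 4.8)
The plan is to reuse the mechanism already established for the upper domain in Proposition \ref{thm:noex1'}, adapted to the test-function $u_e^-$ built on the lower cuspidal domains $\mathcal C_{n,k}$ for odd $n$. First I would observe that, because $0<\alpha\le\frac{\log 2}{2\log 3}$, the constraint (\ref{equa:beta}) permits a choice of $\beta$ with $2\cdot 3^{p\alpha\beta-1-\alpha}<1$ and $2\cdot 3^{p\alpha(\beta+1)-(1+\alpha)}\le 1$ simultaneously (for instance one can take $\beta$ slightly below $-1$, or even $\beta=-1$ when $\alpha\le\frac{\log 2}{2\log 3}$ forces $\alpha p\le 1$), while at the same time $2\cdot 3^{\alpha(\beta-1)}\ge 1$. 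With such a $\beta$, the computations (\ref{eq:E10}) and the subsequent gradient estimate show $u_e^-\in W^{1,p}(\Omega^-_{\psi^\alpha_c})$ for every $1<p\le\infty$ (the tail in $n$ being summable, and after that the H\"older inequality upgrades $p<\infty$ to $p=\infty$ since everything is supported in a bounded set).

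Next I would argue by contradiction: suppose $E(u_e^-)\in W^{1,1}_{\rm loc}(\rr^2)$ is an extension. The key lower bounds are (\ref{equa:inte1}) and (\ref{equa:inte2}), which come from the $ACL$-characterization applied to the horizontal slices through the sets $L^l_{n,k}$, $L^r_{n,k}$ that connect a generation-$n$ (odd) cuspidal domain, where $u_e^-$ equals the large constant $3^{n\alpha\beta}(n\log n)^{-\alpha_p}$, to the neighbouring generation-$(n+1)$ (even) cuspidal domains, where $u_e^-\equiv 0$: on each such slice $E(u_e^-)$ must change by at least that amount, so its $L^1$-norm over the slice is at least a constant multiple of $3^{n\alpha\beta}(n\log n)^{-\alpha_p}$, and since the slice has length comparable to the width of the cusp at height $\sim 3^{-n\alpha}$, an extra factor $3^{-n}$ appears; this is exactly (\ref{equa:inte1})--(\ref{equa:inte2}) with $q=1$. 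Integrating in $x_2$ over the admissible range, whose length is comparable to $3^{-n\alpha}$, and summing over the $2^{n-1}$ cuspidal domains of each odd generation $n$, I would obtain
\begin{equation}
\int_{D(0,2)}|DE(u_e^-)(x)|\,dx\geq C\sum_{n\ {\rm odd}}2^n 3^{n\alpha(\beta-1)}\lf(\frac{1}{n\log n}\r)^{\alpha_p}.
\end{equation}
Since $0<\alpha\le\frac{\log 2}{2\log 3}$ gives $\alpha_p=1$ and the choice of $\beta$ forces $2\cdot 3^{\alpha(\beta-1)}\ge 1$, the series diverges, contradicting $E(u_e^-)\in W^{1,1}_{\rm loc}(\rr^2)$.

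The main obstacle, and the place where care is needed, is justifying the slice inequalities (\ref{equa:inte1})--(\ref{equa:inte2}): one must verify that the connecting strips $L^l_{n,k}$ and $L^r_{n,k}$ genuinely lie in $\Omega^-_{\psi^\alpha_c}$ (equivalently, that at the chosen heights $x_2\in(6^{-\alpha}(2\cdot 3^n)^{-\alpha},\,3^{-\alpha}(2\cdot 3^n)^{-\alpha})$ the horizontal segment from $q_n^k$ to $q_{n+1}^{k_1}$ stays below the graph $\Gamma_{\psi^\alpha_c}$), and that on one end of the strip $u_e^-$ really attains the full constant value $1$ before rescaling while on the other end it vanishes — this relies on the scaling relation between consecutive Cantor generations ($3^{-(n+1)}$ versus $3^{-n}$) and on the fact that $v_-$ is nonzero only on odd generations. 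Once these geometric inclusions and the alternating structure are in place, the $ACL$ slice estimate is routine, and the divergence of the resulting series is an immediate consequence of the parameter constraints exactly as in the proof of Proposition \ref{thm:noex1'}.
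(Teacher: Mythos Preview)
Your proposal is correct and follows essentially the same route as the paper: fix $\beta$ in the range $1-\tfrac{\log 2}{\alpha\log 3}\le\beta\le-1$ (nonempty precisely because $\alpha\le\tfrac{\log 2}{2\log 3}$), so that $u_e^-\in W^{1,\infty}(\Omega^-_{\psi^\alpha_c})$ and hence, by H\"older on its bounded support, $u_e^-\in W^{1,p}$ for every $1<p\le\infty$ (note the H\"older direction is from $p=\infty$ downward, not the reverse as you wrote); then apply the ACL slice estimates (\ref{equa:inte1})--(\ref{equa:inte2}) with $q=1$ and Fubini to obtain the divergent series $\sum_{n\ \mathrm{odd}}2^n 3^{n\alpha(\beta-1)}(n\log n)^{-\alpha_p}$, since $2\cdot 3^{\alpha(\beta-1)}\ge 1$ and $\alpha_p=1$.

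One correction to your obstacle paragraph: the horizontal strips $L^l_{n,k}$, $L^r_{n,k}$ do \emph{not} stay below the graph and are not contained in $\Omega^-_{\psi^\alpha_c}$; between two adjacent cusps the graph dips to height $0$, so any horizontal segment at positive height necessarily crosses into $\Omega^+_{\psi^\alpha_c}$. This is not a problem but rather the point: the ACL estimate is applied to the extension $E(u_e^-)\in W^{1,1}_{\rm loc}(\rr^2)$, and what you actually need to check is only that the two \emph{endpoints} $(q_n^k,x_2)$ and $(q_{n+1}^{k_1},x_2)$ lie inside the respective cuspidal domains $\mathcal C_{n,k}$ and $\mathcal C_{n+1,k_1}$ at heights where $v_-$ takes the values $1$ and $0$ --- which is exactly the scaling verification you describe.
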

\begin{proof}
Since $0<\alpha\leq\frac{\log 2}{2\log 3}$, we fix $1-\frac{\log 2}{\alpha\log 3}\leq\beta\leq-1$ in the definition of $u^-_e$ in (\ref{equa:func}). Then $u^-_e\in W^{1, \fz}(\mathbb H^-_{\psi^\alpha_c})$. Since both $|u^-_e|$ and $|Du^-_e|$ vanish outside a bounded set, the H\"older inequality implies $u^-_e\in W^{1,p}(\mathbb H^-_{\psi^\alpha_c})$, for every $1<p\leq\fz$. Assume that there exists an  function $E(u^-_e)$ in the class $W^{1,1}_{\rm loc}(\rr^2)$. By (\ref{equa:inte1}), (\ref{equa:inte2}) and the Fubini theorem, we have
\begin{eqnarray}\label{eq:E11}
\int_{D(0, 2)}|D E(u^-_e)(x)|dx&\geq& C\sum_{n\ {\rm is\ odd}}\sum_{k=1}^{2^{n-1}}3^{n\alpha(\beta-1)}\lf(\frac{1}{n\log n}\r)^{\alpha_p}\\
                                             &\geq& C\sum_{n\ {\rm is\ odd}}2^n3^{n\alpha(\beta-1)}\lf(\frac{1}{n\log n}\r)^{\alpha_p}=\fz,\nonumber
\end{eqnarray}
since $2\cdot 3^{\alpha(\beta-1)}\geq 1$ and $\alpha_p\leq 1$ This contradicts the assumption that $E(u^-_e)\in W^{1,1}_{\rm loc}(\rr^2)$ and the proof is finished.
\end{proof}

\begin{prop}\label{thm:noex2}
Let $\frac{\log 2}{2\log 3}<\alpha<1$ and $\Gamma_{\psi^\alpha_c}$ be the corresponding Cantor-cuspidal graph. Then, for arbitrary $1<p\leq\frac{(1+\alpha)-\frac{\log 2}{\log 3}}{2\alpha-\frac{\log 2}{\log 3}}$, there exists a function $u^-_e\in W^{1,p}(\Omega^-_{\psi^\alpha_c})$ which can not be extended to be a function in the class $W^{1,1}_{\rm loc}(\rr^2)$. 
\end{prop}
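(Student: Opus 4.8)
The plan is to follow the blueprint of Proposition~\ref{thm:noex2'} (its exact counterpart for the upper domain) and of Proposition~\ref{thm:noex1} (its sibling for the lower domain): I would recycle the test function $u^-_e$ already built for $\Omega^-_{\psi^\alpha_c}$ in \eqref{equa:func} rather than construct a new one, and the only decision to make is the value of the parameter $\beta$. I would fix it at the critical value $\beta=\frac{(1+\alpha)-\frac{\log 2}{\log 3}}{\alpha p}-1$ permitted by \eqref{equa:beta}. With this $\beta$ one computes $p\alpha\beta-1-\alpha=-\frac{\log 2}{\log 3}-\alpha p$ and $p\alpha(\beta+1)-(1+\alpha)=-\frac{\log 2}{\log 3}$, hence $2\cdot 3^{p\alpha\beta-1-\alpha}=3^{-\alpha p}<1$ and $2\cdot 3^{p\alpha(\beta+1)-(1+\alpha)}=1$; since in this range of $p$ one has $\alpha_p=1$ by \eqref{eq:alpha} and thus $\alpha_p p=p>1$, the norm computations already carried out above (leading to \eqref{eq:E10} and to the pointwise bound \eqref{equa:lessC} together with the $L^p$ gradient estimate following it) immediately give $u^-_e\in W^{1,p}(\Omega^-_{\psi^\alpha_c})$.

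The core of the argument is then a contradiction. Suppose $u^-_e$ has an extension $E(u^-_e)\in W^{1,1}_{\rm loc}(\rr^2)$, and apply the pointwise lower bounds \eqref{equa:inte1} and \eqref{equa:inte2} with the exponent $q=1$: for every odd $n$, every $k\in\{1,\dots,2^{n-1}\}$, and almost every level $x_2$ in the interval of length comparable to $3^{-n\alpha}$ appearing there, the $L^1$-norm of $DE(u^-_e)$ over the horizontal segments $L^l_{n,k}(x_2)$ and $L^r_{n,k}(x_2)$ is at least $C\,3^{n\alpha\beta}\left(\tfrac{1}{n\log n}\right)^{\alpha_p}$. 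Integrating in $x_2$ over that interval, then summing over $k\in\{1,\dots,2^{n-1}\}$ and over the odd $n$, and using that the strips $L^l_{n,k}$, $L^r_{n,k}$ are pairwise disjoint and lie inside a fixed disk such as $D(0,2)$, Fubini's theorem produces
$$\int_{D(0,2)}|DE(u^-_e)|\,dx\ \gtrsim\ \sum_{n\ {\rm is\ odd}}2^n\,3^{n\alpha(\beta-1)}\left(\tfrac{1}{n\log n}\right)^{\alpha_p}.$$
This series diverges: $2\cdot 3^{\alpha(\beta-1)}\geq 1$ is, after taking logarithms and substituting the chosen $\beta$, precisely the hypothesis $p\leq\frac{(1+\alpha)-\frac{\log 2}{\log 3}}{2\alpha-\frac{\log 2}{\log 3}}$, and since $\alpha_p=1$ the borderline sum already reduces to a constant multiple of $\sum_n\frac{1}{n\log n}=\infty$. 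This contradicts $E(u^-_e)\in W^{1,1}_{\rm loc}(\rr^2)$.

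I do not expect a serious obstacle: the construction of $u^-_e$ and the estimates \eqref{equa:inte1}--\eqref{equa:inte2} carry the weight, and the scheme is exactly that of Proposition~\ref{thm:noex1} with $\beta$ taken at the borderline value as in Proposition~\ref{thm:noex2'}. The only genuinely fiddly points are bookkeeping: confirming that the strips $L^l_{n,k}$, $L^r_{n,k}$ over odd $n$ and $k\in\{1,\dots,2^{n-1}\}$ are pairwise disjoint and contained in one bounded disk, so that adding up the level-set estimates is legitimate; checking that the relevant $x_2$-interval has length $\gtrsim 3^{-n\alpha}$; and verifying the exponent identity $2\cdot 3^{\alpha(\beta-1)}\geq 1\iff p\leq\frac{(1+\alpha)-\frac{\log 2}{\log 3}}{2\alpha-\frac{\log 2}{\log 3}}$ together with $\alpha_p=1$ in this regime.
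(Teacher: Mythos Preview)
Your proposal is correct and follows essentially the same approach as the paper: fix $\beta=\frac{(1+\alpha)-\frac{\log 2}{\log 3}}{\alpha p}-1$ in the definition of $u^-_e$, invoke the already-established estimates \eqref{equa:inte1}--\eqref{equa:inte2} with $q=1$, and use Fubini together with $2\cdot 3^{\alpha(\beta-1)}\geq 1$ and $\alpha_p=1$ to obtain the divergent series $\sum_{n\ \mathrm{odd}}2^n 3^{n\alpha(\beta-1)}(n\log n)^{-\alpha_p}=\infty$. Your write-up in fact supplies more of the exponent bookkeeping than the paper does.
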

\begin{proof}
For every $1<p\leq\frac{(1+\alpha)-\frac{\log 2}{\log 3}}{2\alpha-\frac{\log 2}{\log 3}}$, we fix $\beta=\frac{(1+\alpha)-\frac{\log 2}{\log 3}}{\alpha p}-1$ in the definition of the function  $u^-_e\in W^{1,p}(\Omega^-_{\psi^\alpha_c})$ in (\ref{equa:func}). Assume that there exists an extended function $E(u^-_e)$ in the class $W^{1,1}_{\rm loc}(\rr^2)$. Then by (\ref{equa:inte1}), (\ref{equa:inte2}) and the Fubini theorem, we have 
\begin{eqnarray}\label{eq:E12}
\int_{D(0, 2)}|D E(u^-_e)(x)|dx&\geq& C\sum_{n\ {\rm is\ odd}}\sum_{k=1}^{2^{n-1}}3^{{n\alpha(\beta-1)}}\lf(\frac{1}{n\log n}\r)^{\alpha_p}\\
                                             &\geq& C\sum_{n\ {\rm is\ odd}}2^n3^{{n\alpha(\beta-1)}}\lf(\frac{1}{n\log n}\r)^{\alpha_p}=\fz,\nonumber
\end{eqnarray}
since $2\cdot3^{\alpha(\beta-1)}\geq 1$ and $\alpha_p\leq 1$. This contradicts the assumption that $E(u^-_e)\in W^{1,1}_{\rm loc}(\rr^2)$ and the proof is finished.
\end{proof}

\begin{prop}\label{thmnoex3}
Let $\frac{\log 2}{2\log 3}<\alpha<1$ and $\Gamma_{\psi^\alpha_c}$ be the corresponding Cantor-cuspidal graph. For arbitrary $\frac{(1+\alpha)-\frac{\log 2}{\log 3}}{2\alpha-\frac{\log 2}{\log 3}}<p<\fz$, there exists a function $u^-_e\in W^{1,p}(\Omega^-_{\psi^\alpha_c})$ which can not be extended to be a function in the class $W^{1,q}_{\rm loc}(\rr^2)$, whenever $\frac{((1+\alpha)-\frac{\log 2}{\log 3})p}{(1+\alpha)-\frac{\log 2}{\log 3}+(1-\alpha)p}\leq q<\fz$.
\end{prop}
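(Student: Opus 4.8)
The plan is to repeat, with the test function $u^-_e$ of \eqref{equa:func} and the slice estimates \eqref{equa:inte1}--\eqref{equa:inte2}, the scheme used for $\Omega^+_{\psi^\alpha_c}$. Write $q_o:=\frac{((1+\alpha)-\frac{\log 2}{\log 3})p}{(1+\alpha)-\frac{\log 2}{\log 3}+(1-\alpha)p}$. Since every bounded open set has finite measure, H\"older's inequality gives $W^{1,q}_{\rm loc}(\rr^2)\subset W^{1,q_o}_{\rm loc}(\rr^2)$ for $q\ge q_o$, so it suffices to exhibit $u^-_e\in W^{1,p}(\Omega^-_{\psi^\alpha_c})$ admitting no extension in $W^{1,q_o}_{\rm loc}(\rr^2)$. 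I would take $\beta=\frac{(1+\alpha)-\frac{\log 2}{\log 3}}{\alpha p}-1$, the endpoint permitted by \eqref{equa:beta}; the estimates already carried out in this subsection show $u^-_e\in W^{1,p}(\Omega^-_{\psi^\alpha_c})$ for this $\beta$. Because $p>\frac{(1+\alpha)-\frac{\log 2}{\log 3}}{2\alpha-\frac{\log 2}{\log 3}}$, definition \eqref{eq:alpha} yields $\alpha_p^{-1}=q_o$, hence $\alpha_pq_o=1$.

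Suppose, for contradiction, that $E(u^-_e)\in W^{1,q_o}_{\rm loc}(\rr^2)$. For each odd $n$ and each $k\in\{1,\dots,2^{n-1}\}$ the connecting strips $L^l_{n,k}$ and $L^r_{n,k}$ lie in $D(0,2)$ and are mutually disjoint; inequalities \eqref{equa:inte1} and \eqref{equa:inte2} bound the $L^{q_o}$-norm of $DE(u^-_e)$ along each horizontal slice $L^l_{n,k}(x_2)$, $L^r_{n,k}(x_2)$ from below by $3^{n\alpha\beta q_o+n(q_o-1)}(n\log n)^{-\alpha_pq_o}$ for almost every $x_2$ ranging over an interval of length $\sim 3^{-n\alpha}$. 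Integrating in $x_2$, summing over $k$ and over odd $n$, and applying Fubini's theorem, I obtain
\begin{equation*}
\int_{D(0,2)}|DE(u^-_e)(x)|^{q_o}\,dx\ \ge\ C\sum_{n\ \mathrm{odd}}2^{n}\,3^{\alpha n(\beta q_o-1)+n(q_o-1)}\Big(\frac{1}{n\log n}\Big)^{\alpha_pq_o}.
\end{equation*}

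The proof is then finished by the borderline identity $2\cdot 3^{\alpha(\beta q_o-1)+(q_o-1)}=1$ --- the exponent equals $(1+\alpha)-\frac{\log 2}{\log 3}-(1+\alpha)=-\frac{\log 2}{\log 3}$, exactly as in the $\Omega^+_{\psi^\alpha_c}$ case --- together with $\alpha_pq_o=1$; these reduce the right-hand side to $C\sum_{n\ \mathrm{odd}}\frac{1}{n\log n}=\fz$, contradicting $E(u^-_e)\in W^{1,q_o}_{\rm loc}(\rr^2)$. The main point to verify carefully is precisely this balance: the extremal admissible $\beta$ and the critical $q_o$ conspire so that the geometric factor $2\cdot 3^{\alpha(\beta q_o-1)+(q_o-1)}$ equals $1$ and the logarithmic power equals $1$, so the divergence is only logarithmic; this is the reason the weight $(n\log n)^{-\alpha_p}$ was inserted into $u^-_e$ --- it is summable enough (in the $W^{1,p}$-norm, where the corresponding geometric series converges strictly) to keep $u^-_e\in W^{1,p}(\Omega^-_{\psi^\alpha_c})$, while any $W^{1,q_o}$-extension is forced to see the non-summable tail $\sum 1/(n\log n)$. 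The underlying geometry (disjointness of the strips, their location in the extension region $\Omega^+_{\psi^\alpha_c}$, and the gradient lower bounds) is already encoded in \eqref{equa:inte1}--\eqref{equa:inte2}, so no further work is needed there.
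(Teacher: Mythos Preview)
Your proposal is correct and follows essentially the same route as the paper: reduce to the critical exponent $q_o$ via H\"older, take the extremal $\beta=\frac{(1+\alpha)-\frac{\log 2}{\log 3}}{\alpha p}-1$, and use the slice estimates \eqref{equa:inte1}--\eqref{equa:inte2} with Fubini to obtain the divergent sum $\sum_{n\ \mathrm{odd}} 1/(n\log n)$ from the identities $2\cdot 3^{\alpha(\beta q_o-1)+(q_o-1)}=1$ and $\alpha_p q_o=1$. Your added explanation of why the logarithmic weight is needed is accurate and complements the paper's more terse presentation.
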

\begin{proof}
Fix 
$$q_o=\frac{((1+\alpha)-\frac{\log 2}{\log 3})p}{(1+\alpha)-\frac{\log 2}{\log 3}+(1-\alpha)p}.$$
The H\"older inequality implies that it suffices to show that $u^-_e$ can not be extended to be a function in the class $W^{1,q_o}_{\rm loc}(\rr^2)$.
Fix $\beta=\frac{(1+\alpha)-\frac{\log 2}{\log 3}}{\alpha p}-1$ in the definition of $u^-_e$ in (\ref{equa:func}). Assume that there exists an extension  $E(u^-_e)\in W^{1,q_o}_{\rm loc}(\rr^2)$. By (\ref{equa:inte1}), (\ref{equa:inte2}) and the Fubini theorem, we have 
\begin{eqnarray}\label{eq:E13}
\int_{D(0, 2)}|DE(u^-_e)(x)|^{q_o}dx&\geq& C\sum_{n\ {\rm is\ odd}}\sum_{k=1}^{2^{n-1}}3^{{(n\beta q_o-n})\alpha+n(q_o-1)}\lf(\frac{1}{n\log n}\r)^{\alpha_pq_o}\\
                                        &\geq&C\sum_{n\ {\rm is\ odd}}2^n3^{\alpha({n\beta q_o-n})+n(q_o-1)}\lf(\frac{1}{n\log n}\r)^{\alpha_pq_o}=\fz,\nonumber
\end{eqnarray}
since $2\cdot 3^{\alpha(\beta q_o-1)+(q_o-1)}=1$ and $\alpha_pq_o=1$. This contradicts the assumption that $E(u^-_e)\in W^{1,q_o}_{\rm loc}(\rr^2)$ and the proof is finished.
\end{proof}

\end{document}